\documentclass[11pt, reqno]{amsart}
\usepackage{amsmath,amsthm,amsfonts,amssymb,mathrsfs,bm,graphicx,stmaryrd}
\usepackage{epstopdf}
\usepackage{mathtools}
\usepackage{dsfont}
\usepackage{multicol}
\usepackage[colorlinks=true,linkcolor=blue,citecolor=blue,breaklinks]{hyperref}
\usepackage{url}

\usepackage{breakurl}
\usepackage{bbm}
\usepackage{subcaption}
\usepackage{enumerate}
\usepackage{longtable}

\newcommand{\scX}{\mathscr{X}}

\newcommand{\QQ}{\mathbb{Q}}
\newcommand{\bz}{\text{\usefont{U}{bboldx}{m}{n}z}}
\newcommand{\bw}{\text{\usefont{U}{bboldx}{m}{n}w}}

\newcommand{\CC}{\mathbb{C}}

\newcommand{\PP}{\mathbb{P}}

\newcommand{\cT}{\mathcal{T}}

\newcommand{\NN}{\mathbb{N}}

\newcommand{\ZZ}{\mathbb{Z}}

\usepackage[letterpaper,hmargin=1.0in,vmargin=1in]{geometry}
\parskip 	\smallskipamount

\newtheorem{theorem}{Theorem}
\newtheorem{lemma}[theorem]{Lemma}

\newtheorem{definition}[theorem]{Definition}

\newtheorem{proposition}[theorem]{Proposition}

\newcommand{\Cov}{\mathrm{Cov}}

\newcommand{\red}{\textcolor{red}}

\usepackage[backend=biber,style=alphabetic,doi=false,maxalphanames=10,maxnames=50]{biblatex}
\AtBeginBibliography{\small}

\begin{document}
\title[]{Uniformity of extremal behaviour along geodesics in Liouville quantum gravity}

\author[]{Manan Bhatia}
\address{Manan Bhatia, Department of Mathematics, The University of Hong Kong, Hong Kong}
\email{mananb@hku.edu}
\author[]{Konstantinos Kavvadias}
\address{Konstantinos Kavvadias, Courant Institute of Mathematical Sciences, New York University, New York, NY, USA}
\email{kk6501@nyu.edu}
\date{}
\maketitle
\begin{abstract}
In random geometry, geodesics often have a tendency to coalesce together and traverse regions highly singular relative to typical environments. In this paper, working with the model of Liouville quantum gravity, we develop a technique which yields zero-one laws for many extremal statistics measured along interior segments of the geodesic. Namely, working with statistics such as the Euclidean dimension, optimal H\"older continuity exponents with respect to the Euclidean metric and the minimal/maximal encountered thickness for the underlying GFF, we obtain zero-one laws for the above for segments in the \emph{bulk} of geodesics, thereby upgrading the results of \cite{GPS20}. The primary technique used, developed in \cite{BK25}, is to lay down a large family of small-scale ``typical'' and well-behaved geodesics along interior segments of a long geodesic.

\end{abstract}

\section{Introduction}
\label{sec:intro}

\subsection{Overview}
Liouville quantum gravity (LQG) was first introduced in 1980s in the physics literature \cite{polyakov1981quantum, Dav88, DK89} as a family of random surfaces arising as scaling limits of planar map models. In the past two decades, LQG has been rigorously constructed as a random measure metric space \cite{DS09,DDDF20,GM21}. For a fixed $\gamma \in (0,2)$, an open and connected set $U \subseteq \mathbb{C}$, and with $h$ being a Gaussian free field (GFF) on $U$, the $\gamma$-\emph{Liouville quantum gravity} (LQG) surface parametrised by $(U,h)$ is formally the random two-dimensional Riemannian manifold with metric tensor 
\begin{equation}\label{eqn:metric_tensor}
 e^{\gamma h} (dx^2 + dy^2),   
\end{equation}
where $dx^2 + dy^2$ denotes the Euclidean metric tensor. Note that the definition \eqref{eqn:metric_tensor} does not make literal sense since $h$ is not well-defined as a function taking pointwise values but rather as a distribution (generalized function). Nevertheless, by a renormalisation approach, it has been established \cite{DDDF20,GM21} that one can indeed construct a (random) metric $D_h$ on $U$ associated with \eqref{eqn:metric_tensor}.

In view of \eqref{eqn:metric_tensor}, $\gamma$-LQG can naturally be viewed as a \emph{random geometry} obtained as a perturbation of the usual Euclidean plane. In the past decade, there has been rapid progress in the understanding of such random geometries, with some other prominent examples being: the directed landscape \cite{DOV18}, first and last passage percolation models \cite{Cor12,ADH17,Gan22}, Brownian geometry \cite{LeGal19}, Kendall's Poisson roads metric \cite{Ken17,BCK25}, critical long range percolation \cite{Bau23,DFH26}, and metrics associated to conformal loop ensembles \cite{MY25,MT25}. All these models, though very different in their details, do possess many commonalities. Owing to the phenomenon of geodesic coalescence, one expects that a geodesic between two ``typical'' points in all of the random geometries, rapidly merges with ``highway'' regions which are certain exceptionally attractive portions of the space.
As a result, one might expect the environment seen along the ``bulk'' of such a geodesic to potentially be very different from that seen around typical regions, and there has recently been significant interest \cite{Die16,BBG24,Mou24,DSV22,MSZ25,Bat24} recently in understanding this disparity.

Returning back to the specific setting of $\gamma$-LQG, in light of the above discussion, if we consider a geodesic $\Gamma_{z,w}$ between points $z\neq w\in \CC$, then one might hope that the environment as one zooms in around a point $\Gamma_{z,w}(t)$ for $t\in (0,D_h(z,w))$ to converge to a scale invariant object with a trivial tail $\sigma$-algebra around it. For instance, such a result has been proved for the Brownian map \cite{Mou24} and also the directed landscape \cite{DSV22}. While we do not establish such a convergence in this paper for $\gamma$-LQG, we develop a technique which allows us to obtain zero-one laws for many extremal statistics measured along the ``bulk'' of a geodesic. Specifically, given a geodesic $\Gamma_{z,w}$ between points $z\neq w\in \CC$ and parametrised according to its arc-length parametrisation $P\colon [0,D_h(z,w)]\rightarrow \CC$, for $[s,t]\subseteq (0,D_h(z,w))$, we consider statistics capturing the extremal behaviour encountered by the path $\Gamma_{z,w}\lvert_{[s,t]}$ during its journey from $\Gamma_{z,w}(s)$ to $\Gamma_{z,w}(t)$. As an example of such an extremal statistic, one might consider the extremal (maximal/minimal) thickness (in the sense of a GFF) encountered by $\Gamma_{z,w}\lvert_{[s,t]}$. In this paper, we develop a technique to establish that such statistics are a.s.\ constant independently of the points $z,w$, the geodesic $\Gamma_{z,w}$ and of the bulk interval $[s,t]$.

The results of this paper upgrade the ones of \cite{GPS20}, which establishes a zero-one law for similar statistics measured along the full-length of geodesics $P$ emanating from a fixed point $z\in \CC$. Namely, by using the tail triviality for the GFF around the point $z$, \cite{GPS20} concludes zero-one laws holding for the \emph{entirety} (as opposed to every sub-segment) of the geodesic $P$. In our case, we leverage a covering argument from \cite{BK25} -- itself motivated by the Brownian geometry work \cite{MQ20} -- to lay down a large family of short typical geodesics along the length of a long geodesic (see Figure \ref{fig:chi-on-geod}). By additionally demanding certain regularity conditions on the structure of these small geodesics, we can establish that corresponding zero-one laws hold everywhere in the \emph{bulk} of the geodesic $P$, rather than only near the ends of geodesics with typical endpoints (see Section \ref{subsec:outline} for a detailed comparison).

\subsection{Main results} Let us now state the main results of the paper. Fix $\gamma \in (0,2)$. We shall work with a whole plane GFF $h$ and the associated $\gamma$-LQG metric $D_h$, where $h$ shall always be normalised to to have average $0$ on the unit circle $\partial B_{1}(0)=\{|z|=1\}$. We refer the reader to Section \ref{sec:preliminaries} for a short introduction to the GFF, its thick points and the associated LQG; for $\alpha\in [-2,2]$ we shall use $\cT_h^\alpha\subseteq \CC$ to denote the set of $\alpha$-thick points of the GFF $h$. For a $D_h$-geodesic $P : [0,T] \to \CC$ parametrised to cover unit $D_h$-length in unit time, we set
\begin{align*}
  \widetilde{\chi}(P)&=\sup\{\chi: \exists C>0 \textrm{ such that } |P(s)-P(t)|\leq C|s-t|^{\chi} \textrm{ for all } s,t \in [0,T]\},\nonumber\\
  \widetilde{\chi}'(P)&=\inf\{\chi': \exists C>0 \textrm{ such that } |P(s)-P(t)|\geq C|s-t|^{\chi'} \textrm{ for all } s,t \in  [0,T]\},
\end{align*}
along with
\begin{align}
  \label{eq:14}
  \chi(P)&=\lim_{\varepsilon\rightarrow 0}\widetilde{\chi}(P\lvert_{[\varepsilon,T-\varepsilon]}),\nonumber\\
  \chi'(P)&=\lim_{\varepsilon\rightarrow 0}\widetilde{\chi}'(P\lvert_{[\varepsilon,T-\varepsilon]}),
\end{align}
where we note the first limit above is increasing while the second one is decreasing as $\varepsilon\rightarrow 0$. Here, $\chi(P),\chi(P')$ are defined to capture the optimal H\"older continuity exponents in the ``bulk'' of the geodesic, without being affected by the potentially outsized influence of the behaviour near the endpoints $P(0),P(T)$.
We also define
\begin{align*}
    \alpha_{\mathrm{max}}(P):=\sup\{\alpha \in [-2,2] : P\lvert_{(0,T)} \cap \cT_h^{\alpha} \neq \emptyset\},\\
    \alpha_{\mathrm{min}}(P):=\inf\{\alpha \in [-2,2]: P\lvert_{(0,T)} \cap \cT_h^{\alpha} \neq \emptyset\},
\end{align*}
and these capture the extremal thicknesses in the bulk of the geodesic $P$.
We are now ready to state the main result of the paper.
\begin{theorem}
  \label{thm:1}
Fix $\gamma \in (0,2)$ and $\alpha \in [-2,2]$, and let $h$ be a whole-plane GFF. %
Then, there exist deterministic constants $\Delta^{\mathrm{Euc}}, \Delta_{\alpha}^{\mathrm{Euc}}, \Delta_{\alpha}^{\mathrm{LQG}}, \chi, \chi', \alpha_{\mathrm{min}}$, and $\alpha_{\mathrm{max}}$, such that the following holds almost surely. For any $D_h$-geodesic $P\colon [0,T]\rightarrow \CC$ parametrised by $D_h$-length and any $[s,t]\subseteq (0,T)$, we have
  \begin{enumerate}
  \item \label{it:eucdim} $\dim_{\mathrm{Euc}}(P\lvert_{[s,t]})=\Delta^{\mathrm{Euc}}$.
  \item $\dim_{\mathrm{Euc}}(P\lvert_{[s,t]}\cap \cT_h^{\alpha})= \Delta^{\mathrm{Euc}}_{\alpha}$ and $\dim_{\mathrm{LQG}}(P\lvert_{[s,t]}\cap \cT^\alpha_h)= \Delta^{\mathrm{LQG}}_{\alpha}$.
  \item \label{it:holder} $\chi(P\lvert_{[s,t]})=\chi$ and $\chi'(P\lvert_{[s,t]})=\chi'$.
  \item \label{it:thickness} $\alpha_{\mathrm{max}}(P|_{[s,t]})=\alpha_{\mathrm{max}}$ and $\alpha_{\mathrm{min}}(P|_{[s,t]})=\alpha_{\mathrm{min}}$.
 \end{enumerate}
\end{theorem}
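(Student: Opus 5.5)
Every statistic $S$ in Theorem~\ref{thm:1} is monotone under taking sub-segments and is determined by arbitrarily small pieces. Dimensions and $\alpha_{\max}$ increase when the segment grows; $\alpha_{\min}$ and $\chi$ decrease (the latter because of the local supremum/infimum structure in \eqref{eq:14}). Each is also a sup or inf over subsegments: for dimensions, $\dim(A\cup B)=\max$; for the Hölder exponents, a finite cover by overlapping subsegments makes the exponent the min or max over pieces. The plan is therefore to prove two bounds with one deterministic constant $c_S$. The first is a \emph{lower/typical bound}: every bulk segment of every geodesic contains a small sub-geodesic on which $S$ takes at least (or at most) the value $c_S$. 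The second is an \emph{upper bound}: $S$ is at most $c_S$ on every segment. Combining them with monotonicity gives equality.

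\textbf{Step 1 (zero-one law for a typical short geodesic).} Following \cite{GPS20}, fix a point $z$ and consider geodesics from $z$ to points at small distance. By scale and translation invariance of $h$ modulo constants, and Weyl scaling of $D_h$, the law of the statistic for the geodesic from $z$ to a point at LQG distance $r$ does not depend on $r$ or $z$. Tail triviality of the GFF at $z$ then shows that $S$ evaluated on the initial piece $\Gamma_{z,\cdot}|_{[0,\delta]}$, as $\delta\to 0$, is a.s.\ a constant $c_S$. I would define $c_S$ this way, using the bulk version (for example $S(P|_{[\delta/2,\delta]})$) so that endpoint effects disappear. Measurability and locality are needed here: $S$ on a piece of geodesic depends only on $h$ near that piece, and thick-point sets are locally determined.

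\textbf{Step 2 (covering argument from \cite{BK25}).} This is the main step and the expected obstacle. We need that a.s., simultaneously for all geodesics $P$ and all $[s,t]\subseteq(0,T)$, there exists a ``good'' short geodesic $\eta$ from a countable dense family of rational points. Goodness means $\eta$ is typical, so Step 1 applies to $\eta$ a.s. for all countably many rational choices simultaneously, and $\eta$ shares a nontrivial bulk subsegment with $P|_{[s,t]}$. I would invoke the \cite{BK25} construction: lay down small typical geodesics along $P$ with probability of success tending to one at each scale, then use a multiscale independence argument (iterating over dyadic annuli around points of $P$) to get that a.s.\ some scale succeeds. Geodesic confluence and the no-bubble/merging properties ensure the overlap is a genuine segment of $P$. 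The regularity condition is that the overlap contains a bulk piece of $\eta$ of the form where $S$ equals $c_S$. Given this, monotonicity immediately yields $S(P|_{[s,t]})\ge c_S$ for increasing statistics (the reverse inequality for decreasing ones).

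\textbf{Step 3 (reverse inequality).} Here I would use that $S$ is a local sup. Cover $P|_{[s,t]}$ by finitely many overlapping segments, each contained in a good typical short geodesic from Step 2. This requires the covering to hold at every point of the bulk, not just somewhere, which is the stronger form of the \cite{BK25} statement: a.s.\ every interior point of every geodesic is in the interior of such an overlap. Each piece has $S=c_S$, and the max/min over finitely many pieces is then $c_S$. This settles items (1), (2), and (4). For item (3), gluing Hölder bounds across overlapping pieces with uniform constants on a compact segment gives $\chi$ and $\chi'$. I expect ensuring the uniform (every-point) covering to be the main difficulty. The argument needs a union bound over a fine grid of scales and locations, with the success probability at each scale bounded by superpolynomial decay coming from independence across many annuli.
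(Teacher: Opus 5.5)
You have the paper's architecture: a lower bound obtained by forcing a small typical piece into every bulk segment via \cite{BK25}, and an upper bound by monotonicity. However, the lower bound has a genuine gap, and it sits exactly where the paper's new idea is needed.

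In Step~1 you propose to define $c_S$ through a \emph{bulk} piece such as $S(\Gamma|_{[\delta/2,\delta]})$ ``so that endpoint effects disappear''. Tail triviality at $z$ makes only germ quantities deterministic, such as $\lim_{\delta\to 0}S(\Gamma|_{[0,\delta]})$ (equivalently a $\limsup$ over bulk pieces shrinking to $z$). It says nothing about a fixed bulk piece. Section~\ref{subsec:outline} stresses that the \cite{GPS20} argument cannot exclude bulk segments of typical geodesics having a random, strictly smaller value. Determinism of bulk values is a special case of the theorem, not an input. So your Step~2 ``regularity condition'', that the overlap of $\eta$ with $P$ contains a piece on which $S=c_S$, has no mechanism behind it.

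There is also a structural problem. The overlap of $\eta$ with $P$ depends on $P$. The multiscale argument instead needs a single scale-invariant event of uniformly positive probability, phrased in terms of the small geodesics alone, that controls $S$ on whatever gets forced into \emph{every} $P$ at once.

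The paper resolves both issues with the $\scX$ of Definition~\ref{def:chi_configuration}, taking $u_\pm\in B_{\Phi r}(z-r)$ and $v_\pm\in B_{\Phi r}(z+r)$ clustered around two \emph{fixed} points. Confluence at $z\pm r$ shows that for $\Phi$ small, with probability at least $1/2$, the overlap $P_+\cap P_-$ contains the part of $\Gamma_{z-r,z+r}$ between its last exit from $B_{\varepsilon r}(z-r)$ and its first entrance into $B_{\varepsilon r}(z+r)$ (Lemma~\ref{lem:4}). By countable stability and monotone exhaustion (Lemma~\ref{lem:2}), $S$ on that piece is within $\delta$ of $S(\Gamma_{z-r,z+r})$. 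That value equals $c_S$ by Proposition~\ref{prop:zero_one_law} applied to the \emph{whole} typical geodesic.

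This defines the event $H_r(z)$, which depends only on $P_\pm$ and the field near them modulo constants; that measurability needs checking, notably for the H\"older exponents. Proposition~\ref{lem:G} then places such an $\scX$ in every bulk segment of every geodesic. The opposite-sides geometry forces the entire overlap $P_+\cap P_-$, which is determined by $P_\pm$ alone, into $P$. One only gets $S\ge c_S-\delta$, not $S=c_S$, and then lets $\delta\to 0$.

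Separately, Step~3 misplaces the difficulty: no finite cover or union bound over grids is needed. Propositions~\ref{prop:strong_confluence} and~\ref{prop:uniqueness_of_geodesics} directly give $P|_{[s,t]}\subseteq\Gamma_{z_n,w_n}$ for rational $z_n,w_n$ near $P(s'),P(t')$ with $s'<s<t<t'$. Monotonicity plus Proposition~\ref{prop:zero_one_law} for the whole geodesic $\Gamma_{z_n,w_n}$ then gives the one-sided bound. Only that one-sided bound on pieces is needed, and it is all that \cite{GPS20} provides; your claim that ``each piece has $S=c_S$'' is unjustified and unnecessary.
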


\begin{figure}[]
    \centering
    \begin{subfigure}{0.4\textwidth}
        \centering
        \includegraphics[width=\textwidth]{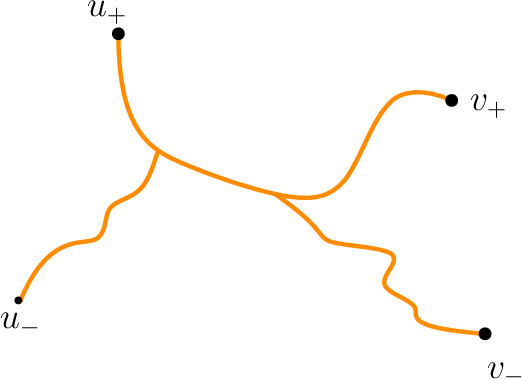}
    \end{subfigure}
    \hfill
    \begin{subfigure}{0.4\textwidth}
        \centering
        \includegraphics[width=\textwidth]{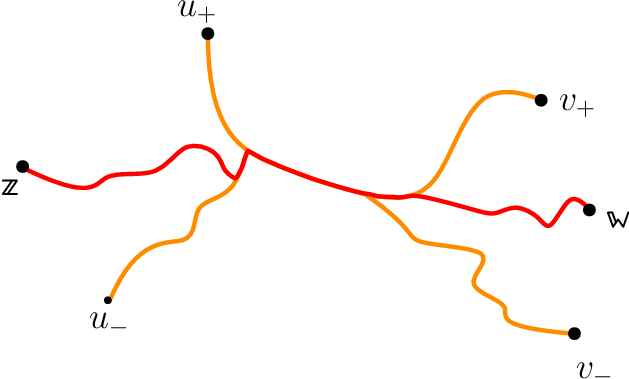}
    \end{subfigure}
    \caption{The left panel depicts the occurrence of the event $\scX=\scX^{u_+,v_+}_{u_-,v_-}$. The unique $D_h$-geodesic from $u_+$ to $v_+$ merges with the unique $D_h$-geodesic from $u_-$ to $v_-$ and after they merge,  the geodesics overlap on a segment up until they separate. After the separation point, the geodesics do not intersect again.  In the right panel, the $D_h$-geodesic $\Gamma_{\bz,\bw}$ (red) passes through the $\scX$ in the sense that $\Gamma_{u_+,v_+}\cap \Gamma_{u_-,v_-}\subseteq \Gamma_{\bz,\bw}$ with $\Gamma_{u_+,v_+}$ and $\Gamma_{u_-,v_-}$ being on opposite sides of $\Gamma_{\bz,\bw}$. This original source of this figure is \cite[Figure 2]{BK25}. %
    }
    \label{fig:two-panels}
\end{figure}

\subsection{Context and proof outline}
\label{subsec:outline}

\begin{figure}
  \centering
  \includegraphics[width=0.8\linewidth]{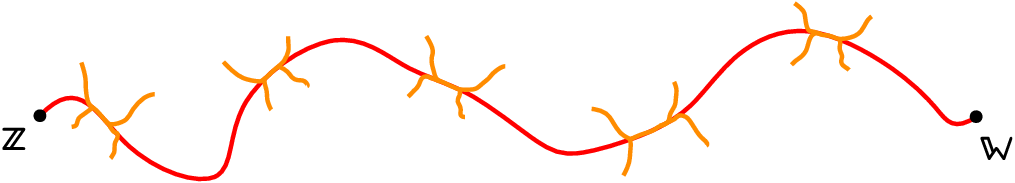}
  \caption{The geodesic $\Gamma_{\bz,\bw}$ passing through multiple small $\scX$s. The original source of this figure is \cite[Figure 1]{BK25}.}
  \label{fig:chi-on-geod}
\end{figure}

We now compare the results of this paper with those from \cite{GPS20} and give a broad overview of the strategy of the proof of Theorem~\ref{thm:1}. For clarity of exposition, we focus on item \eqref{it:eucdim} in Theorem \ref{thm:1}, namely zero-one laws for the Euclidean dimension of geodesic segments.

First, we discuss the corresponding zero-one law for the Euclidean dimension proved in \cite{GPS20}. By leveraging the tail triviality of the GFF around a fixed point $z$ and leveraging geodesic coalescence, \cite{GPS20}, in particular, constructs a semi-infinite\footnote{An infinite path starting from $z$ whose every segment is a geodesic between its endpoints.} geodesic $P_z^\infty$ from $z$ to $\infty$ and show that almost surely, there exists a constant $\Delta^{\mathrm{Euc}}$ such that for any $\varepsilon>0$, we have $\dim_{\mathrm{Euc}}(P^\infty_z\lvert_{[0,\varepsilon]})=\Delta^{\mathrm{Euc}}$. We now give a heuristic overview of the construction of $P_z^\infty$ and above argument, presented somewhat differently from the original source. For a large but fixed $A>1$, let $E_r(z)$ be the event that all geodesics from points in $\partial B_r(z)$ to points in $\partial B_{Ar}(z)$ pass through a common (random) point $p\in B_{Ar}(z)\setminus B_r(z)$ and that all such geodesics are determined by $\sigma(h\lvert_{B_{A^2r}(z)\setminus B_{A^{-1}r}(z)})$. Then \cite{GPS20} establishes that $\PP(E_r(z))>0$ independently of $r$ (Lemma 4.8 therein) as long as $A$ is chosen to be large enough. As a consequence, by the independence structure of the GFF when zooming into the point $z$, one can prove that almost surely, the events $E_{r_i}(z)$ occur for a bi-infinite sequence of increasing dyadic radii $\{r_i\}_{i\in \ZZ}\subseteq \{2^n\}_{n\in \ZZ}$ with the corresponding coalescence points being denoted by $\{p_i\}_{i\in \ZZ}$ (we refer the reader to \cite[Section 2.1]{BBG24} for this formulation). The geodesic $P_z^\infty$ now is defined to be the concatenation $\bigcup_{i\in \ZZ} \Gamma_{p_i,p_{i+1}}$. The fragments $\Gamma_{p_i,p_{i+1}}$ can be seen to have the same law up to scaling with rapid decay of correlations, and as a consequence, by a law of large numbers heuristic, one would expect that almost surely,
\begin{equation}
  \label{eq:5}
  \dim_{\mathrm{Euc}}(P^\infty_z\lvert_{[0,\varepsilon]})= \operatorname{ess\,sup}(\dim_{\mathrm{Euc}}(\Gamma_{p_0,p_{1}}))\footnote{Here, for a random variable $X$, $\operatorname{ess\,sup}(X)$ refers to the supremum of the support of the law of $X$}.
\end{equation}
As discussed in \cite[Remark 4.3]{GPS20}, the above argument does not rule out that $\dim_{\mathrm{Euc}}(P^\infty_z\lvert_{[0,\varepsilon]})$ could be entirely carried by the end point $z$, in the sense that for any $0<s<t<\infty$, it is entirely possible that $\dim_{\mathrm{Euc}}(P^\infty_z\lvert_{[s,t]})$ is random and strictly smaller than $\Delta^{\mathrm{Euc}}$, though it converges almost surely to the value $\Delta^{\mathrm{Euc}}$ as $s\downarrow 0$. The goal of Theorem \ref{thm:1} \eqref{it:eucdim} is to rule out the above, thereby establishing that the Euclidean dimension of every segment in the bulk of a geodesic is deterministic, and therefore equal to $\Delta^{\mathrm{Euc}}$.

In order to achieve this, we will use certain techniques introduced in \cite{BK25}. In particular, motivated by the work \cite{MQ20} in Brownian geometry, \cite{BK25} uses the presence of a specific configuration which we denote by $\scX$. For points $u_+,v_+,u_-,v_- \in \CC$, we say that $\scX_{u_-,v_-}^{u_+,v_+}$ occurs if the scenario described in the first panel of Figure \ref{fig:two-panels} occurs; we shall use $P_+$ (resp.\ $P_-$) to denote the geodesic between $u_+,v_+$ (resp.\ $u_-,v_-$). Moreover, for points $\bz,\bw \in \CC$ and $[s,t]\subseteq (0,D_h(\bz,\bw))$, we say that $\Gamma_{\bz,\bw}\lvert_{[s,t]}$ passes through the $\scX_{u_-,v_-}^{u_+,v_+}$ if the scenario in the second panel holds, that is, if $P_+\cap P_-\subseteq \Gamma_{\bz,\bw}\lvert_{[s,t]}$. It is shown in \cite{BK25} (see \cite[Theorem~2, Proposition~35]{BK25}) that it is almost surely the case that $\Gamma_{\bz,\bw}\lvert_{[s,t]}$ passes through (see Figure \ref{fig:chi-on-geod}) a significant number of $\scX$s corresponding to some points $u_+,v_+,u_-,v_- \in \QQ^2$, where these $\scX$s should be thought of as existing at a very small scale compared to the geodesic $\Gamma_{\bz,\bw}$. Now, by tweaking the parameters in the definition of the $\scX$ above, one can take $u_+,u_-$ (resp.\ $v_+,v_-$) to be very close to each other, and by geodesic coalescence this can be used to ensure that the geodesic $P_+$ from $u_+$ to $v_+$ and $P_-$ from $u_-$ to $v_-$ overlap for most of their journeys. However, since the points $u_+,u_-,v_+,v_-$ are all ``typical''\footnote{More accurately, we use that the points $u_+,u_-$ (resp.\ $v_-,v_+$) are located very close to a typical point, labelled later as $z-r$ (resp.\ $z+r$) in Figure \ref{fig:setup1}. As $\Phi$ therein is taken smaller, with high probability, $P_-\cap P_+$ occupies a progressively larger fraction of the geodesics $P_-,P_+$ and the geodesic $\Gamma_{z-r,z+r}$.}, the results of \cite{GPS20} can be used to ensure that on a positive probability good event depending on $\delta>0$, $\dim_{\mathrm{Euc}}(P_+\cap P_-)> \Delta^{\mathrm{Euc}}-\delta$. Thus, if $\Gamma_{\bz,\bw}\lvert_{[s,t]}$ passes through such a $\scX_{u_-,v_-}^{u_+,v_+}$, then we would in particular have $\dim_{\mathrm{Euc}}(\Gamma_{\bz,\bw}\lvert_{[s,t]})\geq \dim_{\mathrm{Euc}}(P_+\cap P_-)> \Delta^{\mathrm{Euc}}-\delta$, and since $\delta$ is arbitrary, we would obtain $\dim_{\mathrm{Euc}}(\Gamma_{\bz,\bw}\lvert_{[s,t]})\geq \Delta^{\mathrm{Euc}}$. Finally, coming to the upper bound, the results of \cite{BK25} allow us to fully cover the interior of any geodesic by geodesics between rational points, and this combined with the results of \cite{GPS20} can be straightforwardly used to obtain the reverse inequality $\dim_{\mathrm{Euc}}(\Gamma_{\bz,\bw}\lvert_{[s,t]})\leq \Delta^{\mathrm{Euc}}$.

In this exposition, while we focused on the case of the Euclidean dimension of a geodesic, we expect it to be applicable broadly for ``extremal'' statistics, in the sense of obtaining zero-one laws for the extremal behaviour observed as a geodesic travels from $P(s)$ to $P(t)$ for ``bulk'' intervals $[s,t]$. To illustrate this, we note that the same approach applies to the maximal/minimal thickness observed as one travels along a geodesic segment (Theorem \ref{thm:1} \eqref{it:thickness}). On the contrary, this approach would not yield any information for the ``average'' thickness (see \cite[Problem 5.3]{DDG21}) along $\Gamma_{\bz,\bw}\lvert_{[s,t]}$ since the $\scX^{u_+,v_+}_{u_-,v_-}$ events and the corresponding segments $P_-\cap P_+$ a priori cover only a vanishing fraction of the overall length of the geodesic $\Gamma_{\bz,\bw}\lvert_{[s,t]}$.

\subsection{Notation}
For a simple curve $P : [a,b] \to \CC$ viewed as a path from $P(a)$ to $P(b)$, we will denote by $P^{\text{L}}$ (resp.\ $P^{\text{R}}$) the left (resp.\ right) side of $P$, and it is defined as a collection of prime ends of the boundary of the simply connected domain $\CC \cup \{\infty\} \setminus P$. Thus, for all $r \in (a,b)$, we have that $P^{\text{L}}(r) \neq P^{\text{R}}(r)$ and for $r \in \{a,b\}$, we have $P^{\text{L}}(r) = P^{\text{R}}(r)$. For a curve $P$ as above and another curve $P' : [a',b'] \to \CC$ such that $P'(b') \in P((a,b))$ but $P'([a',b')) \cap P([a,b]) = \emptyset$, we have that $P'(b') \in P^{\star}$ for precisely one $\star \in \{\text{L},\text{R}\}$ and we say that $P'$ is to the left (resp.\ right) of $P$ if $\star = \text{L}$ (resp.\ $\star = \text{R}$).

For points $z , w \in \CC$, we will denote by $\Gamma_{z,w}$ a $\gamma$-LQG geodesic from $z$ to $w$, and we emphasize that it is possible to have points $z,w$ with multiple possible choices of $\Gamma_{z,w}$. Furthermore, unless otherwise stated, the $D_h$-geodesics will always be parametrised to cover unit $D_h$-length in unit time. Also, to avoid clutter, we do not notationally distinguish between a geodesic $\Gamma_{z,w}$ and its graph (seen as a subset of $\CC$); for example, we might write $z=\Gamma_{z,w}(0)\in \Gamma_{z,w}$. Finally, for $z \in \mathbb{C}$ and $r>0$, we shall use $B_r(z)$ to denote the Euclidean ball of radius $r$ centered at $z$.

\textbf{Acknowledgements.} K.K. was supported by the Simons Collaboration Grant
\emph{Probabilistic Paths to Quantum Field Theory}.

\section{Preliminaries}
\label{sec:preliminaries}

\subsection{Gaussian free field} 
\label{subsec:gff}

Throughout this paper, we shall work with a whole-plane GFF $h$ which is defined as the centered Gaussian process $h$ with covariances given by
\begin{equation}\label{eqn:gff_covariance}
    \Cov(h(u) , h(v)) = G(u,v) = \log \,\,\frac{\max\{|u|,1\}\max\{|v|,1\}}{|u-v|}
\end{equation}
for all $u,v \in \CC$. Since the covariance kernel $G(u,v)$ explodes along the diagonal, $h$ cannot be well-defined pointwise almost surely. However, it is well-defined as a distribution (generalized function) in the sense that almost surely, for any bump function $\phi$, the average $(h,\phi) = \int h(u) \phi(u) du$ is well-defined. Moreover, it is shown in \cite[Proposition~3.1]{DS09} that if $h_r(z)$ denotes the average of $h$ on $\partial B_r(z)$, there almost surely exists a version of $h$ such that the map $(z,r) \mapsto h_r(z)$ is almost surely continuous. Also, the renormalization in \eqref{eqn:gff_covariance} is chosen so that $h_1(0) = 0$. Furthermore, the law of $h$ is scale, translational and rotationally invariant in the sense that for every fixed $z \in \CC, r>0$, and $\theta \in [0,2\pi)$,  the laws of the fields $h(\red{e^{i\theta}}\cdot),h(\cdot+z) - h_1(z)$ and $h(r \cdot) - h_r(0)$ are the same.

For $\alpha\geq 0$, we say that a point $z\in \CC$ is $\alpha$-thick if $\liminf_{\varepsilon\rightarrow 0}\frac{h_\varepsilon(z)}{\log\varepsilon^{-1}}=\alpha$. For $\alpha<0$, $z$ is $\alpha$-thick for $h$ if it is $-\alpha$-thick for $-h$. Heuristically, thick points describe the regions where $h$ takes atypically large and low values, and it is known that almost surely, with $\cT_h^\alpha\subseteq \CC$ denoting the set of thick points, one has $\dim_{\mathrm{Euc}}(\cT_h^\alpha)=(2-\alpha^2/2)_{+}$ \cite{HMP10}, with the set being empty if $\alpha\notin [-2,2]$.
 
\subsection{The $\gamma$-LQG metric $D_h$} Given a whole plane GFF $h$ and a $\gamma\in (0,2)$, the LQG metric $D_h$ is defined as a metric on $\CC$ measurable with respect to $h$ satisfying a certain axiomatic definition \cite[Theorem~1.2]{GM21} in accordance with the heuristic definition \eqref{eqn:metric_tensor}. In the works \cite{DDDF20,DFGPS20,GM21}, it was shown that there exists a unique metric $D_h$ satisfying the above axioms and this rigorously defines the LQG metric. Since this axiomatic definition is by now standard in the literature, we shall not repeat it here. The space $(\CC,D_h)$ is almost surely geodesic in the sense that with the length $\ell(\eta;D_h)$ of a path $\eta\colon[0,1]\rightarrow \CC$ being defined by $\ell(\eta;D_h)=\sup_{0=a_0<\dots<a_n=1}\sum D_h(\eta(a_i),\eta(a_{i+1}))$, with the supremum being over all partitions of $[0,1]$, there almost surely exists a geodesic $\Gamma_{z,w}$ satisfying $\ell(\Gamma_{z,w};D_h)=D_{h}(z,w)$ simultaneously for all $z,w\in \CC$, and we shall always parametrise such geodesics to cover unit $D_h$-length in unit time. Further, for fixed points $z,w$ there exists \cite{MQ18} a unique such geodesic $\Gamma_{z,w}$. Finally, both $D_h$ and the Euclidean metric are a.s.\ locally H\"older continuous with respect to each other with the optimal H\"older continuity exponents varying in terms of $\gamma$ \cite[Theorem~1.7]{DFGPS20}.

\subsection{Zero-one laws for LQG geodesics}
We now state the main result from \cite{GPS20} concerning zero-one laws for LQG geodesics starting from deterministic points.

\begin{proposition}\label{prop:zero_one_law}
Fix $\gamma \in (0,2)$ and $\alpha \in [-2,2]$, and let $h$ be a whole-plane GFF. Then, there exist deterministic constants $\Delta^{\mathrm{Euc}},\Delta_{\alpha}^{\mathrm{Euc}},\Delta_{\alpha}^{\mathrm{LQG}},\chi,\chi',\alpha_{\mathrm{min}}$, and $\alpha_{\mathrm{max}}$, such that we have the following. Fix $\bz\in \CC$ and let $P: [0,T] \to \CC$ be any $D_h$-geodesic starting from $\bz$ and parametrised to have unit speed with respect to its $D_h$-length. Then the following hold almost surely.
\begin{enumerate}
  \item \label{it:euclidean_deterministic}
  $\dim_{\mathrm{Euc}}(P)=\Delta^{\mathrm{Euc}}$,
  \item \label{it:thick_points_deterministic}
  $\dim_{\mathrm{Euc}}(P\cap \cT_h^{\alpha})= \Delta^{\mathrm{Euc}}_{\alpha}$ and $\dim_{\mathrm{LQG}}(P\cap \cT^\alpha_h)= \Delta^{\mathrm{LQG}}_{\alpha}$,
  \item \label{it:optimal_holder_deterministic}
  $\chi(P)=\chi$ and $\chi(P)=\chi'$,
  \item \label{it:max_min_thickness_deterministic}
  $\alpha_{\mathrm{max}}(P)=\alpha_{\mathrm{max}}$ and $\alpha_{\mathrm{min}}(P)=\alpha_{\mathrm{min}}$.
 \end{enumerate}   
\end{proposition}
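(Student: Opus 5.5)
The plan is a tail-triviality argument at the fixed point $\bz$, combined with the coalescence events $E_r(\bz)$ described in Section~\ref{subsec:outline}, and then an upgrade from a germ statement to a statement about the whole geodesic. By translation invariance of $h$ (modulo an additive constant) we may take $\bz=0$. Adding a constant $c$ to $h$ multiplies $D_h$ by $e^{\xi c}$. This leaves unchanged the geodesics as sets, Euclidean dimensions, LQG dimensions (which are only rescaled by a constant factor of the metric), H\"older exponents, and thick-point sets. So every statistic $S$ in the list is a function of $h$ modulo constants.

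\textbf{Step 1 (the infinite geodesic and the germ).} Choose $A$ large so that $\PP(E_r(0))\ge p>0$ uniformly in $r$. By the near-independence of $h$ across dyadic annuli, a.s.\ $E_{r_i}(0)$ occurs along a bi-infinite sequence of radii $r_i\to 0$ and $r_i\to\infty$, with coalescence points $p_i$. Set $P_0^\infty=\bigcup_i\Gamma_{p_i,p_{i+1}}$. Any $D_h$-geodesic $P$ from $0$ to a point outside $B_{Ar_i}(0)$ passes through $p_i$. Since $0$ is a fixed point, geodesics between $0$ and each $p_j$ are a.s.\ unique by \cite{MQ18}, applied via the coalescence structure. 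Hence $P$ agrees with $P_0^\infty$ up to the time it hits $p_i$. For a statistic $S$ applied to an initial segment, define $X(\varepsilon)=S(P_0^\infty|_{[0,\varepsilon]})$; for $\chi,\chi'$ we use the bulk versions on $[0,\varepsilon]$. For dimensions and for $\alpha_{\max}$, $X$ is nondecreasing in $\varepsilon$; for $\alpha_{\min}$ it is nonincreasing; for the H\"older exponents the monotonicity is also one-sided. The limit $X(0+)$ depends only on $h|_{B_\delta(0)}$ modulo constants, for every $\delta>0$, since the segment of $P_0^\infty$ inside $B_{r_i}$ is measurable with respect to $h|_{B_{A^2 r_i}}$. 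It is therefore measurable with respect to the tail $\sigma$-algebra at $0$, which is trivial for the whole-plane GFF. Hence $X(0+)$ equals a deterministic constant, which defines $\Delta^{\mathrm{Euc}}$, $\chi$, $\alpha_{\max}$, and the other constants.

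\textbf{Step 2 (from germ to all $\varepsilon$).} Scale invariance of $h$ at $0$ modulo constants implies that $P_0^\infty$ is scale-invariant in law up to reparametrisation. Under this scaling the time parameter scales by a random but positive factor, and that factor is irrelevant for statistics taken over initial segments. Consequently the law of $X(\varepsilon)$ does not depend on $\varepsilon$, at least after replacing the time cutoff by a Euclidean-radius cutoff, which is monotonically equivalent. A monotone family of random variables all having the same law, with limit $X(0+)$ constant, is a.s.\ equal to that constant for all $\varepsilon$ simultaneously; we take $\varepsilon$ rational and use monotonicity. So $S(P_0^\infty|_{[0,\varepsilon]})$ is a.s.\ constant for every $\varepsilon$.

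\textbf{Step 3 (general finite $P$ from $\bz$), the main obstacle.} For $P\colon[0,T]\to\CC$ from $0$ with an arbitrary, possibly random, endpoint, the lower bound (or the upper bound, for $\alpha_{\min}$ and $\chi'$) follows because an initial segment of $P$ coincides with $P_0^\infty|_{[0,\varepsilon]}$ for small $\varepsilon$. The reverse bound over the remainder of $P$ is the delicate point, because the far end of $P$ is not a typical point. To handle it, I would first prove Steps 1--2 simultaneously for all rational starting points $q$ using translation invariance. I would then argue that $P|_{[\varepsilon,T]}$ is covered by countably many geodesic segments between rational points, or by initial pieces of the $P_q^\infty$. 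For the H\"older exponents this covering is only needed on compact subintervals of the interior, which is why the statement uses the bulk exponents $\chi,\chi'$.
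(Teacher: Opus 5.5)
Your Steps 1--2 are fine and are essentially the tail-triviality argument of \cite{GPS20} that the paper invokes. They give the deterministic value on every initial segment of the ray $P_0^\infty$. Via confluence at the root, they also give the one-sided bound for an arbitrary geodesic $P$ from the starting point, the bound that only uses its initial segment.

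There are two small slips. First, the germ gives an upper bound for $\chi$, not $\chi'$: indeed $\chi(P)\le\chi$ and $\chi'(P)\ge\chi'$, compare \eqref{eq:H3}. Second, uniqueness of geodesics from the root to the random points $p_j$ does not follow from \cite{MQ18}; use root confluence instead.

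The genuine gap is the reverse bound on the part of $P$ away from its starting point, which Step 3 leaves at ``I would then argue''. This part cannot be skipped. The statement concerns all of $P$, and the paper relies on it exactly for the middle of geodesics: in the proof of Proposition~\ref{prop:upper_bound}, $P|_{[s,t]}$ sits in the bulk of $\Gamma_{z_n,w_n}$ and is bounded by the value for the whole of $\Gamma_{z_n,w_n}$.

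Neither of your proposed coverings supplies this bound.
\begin{itemize}
\item Covering by segments of geodesics between rational points is circular. Tail triviality at $q$ and $q'$ controls $\Gamma_{q,q'}$ only up to the last coalescence point $p_i(q)$ with $Ar_i(q)<|q-q'|$, and from the analogous point near $q'$. The portion in between, at scale comparable to $|q-q'|$, is seen by neither germ $\sigma$-algebra, and it is exactly what you need to bound.
\item Covering by pieces of the rays $P_q^\infty$ would require bulk points of $\Gamma_{q,q'}$ to lie on rays from rational points. Nothing in Steps 1--2 gives this, and there is no reason to expect it. The rays from all rational points merge into a single tree with one end at $\infty$, and a geodesic between two points need not be a path in that tree.
\end{itemize}
So tail triviality at one point plus scaling controls the ray, not the bulk of geodesics to other points.

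The paper does not derive this bound either. It takes the whole-geodesic statement from \cite[Theorem~1.8, Remark~1.12]{GPS20} and asserts the analogue for items (3)--(4).

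If you want a self-contained route with the tools of Section~3, one option is to run Proposition~\ref{lem:G} with reversed events, e.g.\ $\{\dim_{\mathrm{Euc}}(P_+\cap P_-)\ge\Delta^{\mathrm{Euc}}+\eta\}$ and the analogues of \eqref{eq:H2}--\eqref{eq:H4}.
\begin{itemize}
\item If such an event had probability at least some $p_1>0$, then an initial segment of the ray from $0$ would contain such an overlap. This contradicts Step 2.
\item Combined with the containment \eqref{eq:16}, this bounds $\Gamma_{-1,1}$ away from its endpoints. By invariance it then bounds all $\Gamma_{q,q'}$.
\item Strong confluence, used as in Proposition~\ref{prop:upper_bound}, transfers the bound to arbitrary $P$.
\end{itemize}
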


\begin{proof}
Items \eqref{it:euclidean_deterministic} and \eqref{it:thick_points_deterministic} follow from results in \cite{GPS20} (see \cite[Theorem~1.8, Remark~1.12]{GPS20}. Items \eqref{it:optimal_holder_deterministic} and \eqref{it:max_min_thickness_deterministic} above were not considered in \cite{GPS20}, but precisely the same zero-one law argument using the tail-triviality of the Gaussian free field when zooming in around the fixed point $\bz$ yields the above result as well.  
\end{proof}

\subsection{Strong confluence of LQG geodesics and consequences}
As mentioned earlier, geodesics for the LQG metric enjoy the property of geodesic confluence, which refers to the underlying tendency of geodesics to merge with each other. Progressively stronger versions of this phenomenon have been established \cite{GM20,GPS20,BK25} and we now state the version from \cite{BK25} which we refer to as \emph{strong confluence}.

\begin{proposition}(\cite[Theorem~2]{BK25})
\label{prop:strong_confluence}
Fix $\gamma \in (0,2)$ and consider a whole-plane GFF $h$ and the associated LQG metric $D_h$. Then the following holds almost surely. For any points $u,v \in \CC$, any $D_h$-geodesic $P$ from $u$ to $v$ and any sequence of $D_h$-geodesics $\{P_n\}_{n \in \NN}$ converging to $P$ in the Hausdorff sense with respect to the Euclidean metric (or equivalently, the metric $D_h$), for all $n$ large enough, we have 
\begin{equation*}
    (P_n \setminus P) \cup (P \setminus P_n) \subseteq B_{\varepsilon}(u) \cup B_{\varepsilon}(v).
\end{equation*}  
\end{proposition}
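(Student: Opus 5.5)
The plan is to argue by contradiction and reduce everything to a finite collection of $\scX$ configurations lying along $P$. The configurations will be built from rational points, so their geodesics are a.s.\ unique \cite{MQ18}, and there are only countably many of them, so that we may work on a single almost sure event. The ingredients I would take from the covering results of \cite{BK25} (Proposition~35 there, the fact recalled in Section~\ref{subsec:outline}) are the following, holding a.s.\ simultaneously for all geodesics. For every $D_h$-geodesic $P\colon[0,T]\to\CC$ and every $[s,t]\subseteq(0,T)$, the segment $P\lvert_{[s,t]}$ passes through some $\scX^{u_+,v_+}_{u_-,v_-}$ with $u_\pm,v_\pm\in\QQ^2$. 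That is, $P_+\cap P_-\subseteq P\lvert_{[s,t]}$, and $P_+,P_-$ approach $P_+\cap P_-$ from opposite sides of $P$. Moreover, the interior of $P$ is covered by the overlap segments $P_+\cap P_-$ of such configurations, in the sense that every interior point of $P$ lies in the relative interior (within $P$) of some overlap segment.

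\textbf{Step 1 (finite chain).} Fix $\varepsilon>0$ and choose $\delta>0$ with $P([0,\delta])\subseteq B_{\varepsilon/2}(u)$ and $P([T-\delta,T])\subseteq B_{\varepsilon/2}(v)$. By compactness of $P([\delta,T-\delta])$ and the covering property, I pick finitely many configurations $\scX_1,\dots,\scX_k$. Their overlap segments $I_j=P_+^{(j)}\cap P_-^{(j)}=P\lvert_{[a_j,b_j]}$ are to satisfy $a_1<\delta$, $b_k>T-\delta$ and $a_{j+1}<b_j$, so that consecutive segments overlap and their union is the connected arc $P\lvert_{[a_1,b_k]}\supseteq P\lvert_{[\delta,T-\delta]}$.

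\textbf{Step 2 (trapping; the main obstacle).} I would show that for each fixed $j$, any geodesic $P_n$ sufficiently Hausdorff-close to $P$ contains $I_j$. The idea is as follows. The four arms $P_\pm^{(j)}\setminus I_j$ leave $P$ on opposite sides and end at the points $u_\pm,v_\pm$, which lie at positive distance from $P$. Hence $P$ together with these arms cuts a small neighbourhood of $P$ into regions, and $P_n$, which travels from near $u$ to near $v$ inside a thin neighbourhood of $P$, must hit both $P_+^{(j)}$ and $P_-^{(j)}$.

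The geodesic exchange argument then does the rest. Suppose a geodesic meets the unique geodesic $P_+^{(j)}$ at two points. Replacing the corresponding subsegment of $P_+^{(j)}$ by the segment of $P_n$ between these two points gives another geodesic from $u_+$ to $v_+$, so by uniqueness the two segments agree. The same holds for $P_-^{(j)}$, and combining the two forces $P_n\supseteq I_j$, since the only way to switch from the $+$ side to the $-$ side without crossing twice is through the common segment.

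Making this planar-topology argument airtight is the delicate part. One has to rule out $P_n$ touching $P_\pm^{(j)}$ only once, and to use that $P_n$ is a simple curve which cannot backtrack. I would first try to reduce to the "passes through $\scX$" criterion of \cite{BK25}, namely that $P_n$ separates $P_+^{(j)}$ from $P_-^{(j)}$ locally, which should already be established there in proving their Proposition~35.

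\textbf{Step 3 (conclusion).} For $n$ large, $P_n$ contains every $I_j$, hence contains $P(a_1)$, $P(b_k)$ and the arc $P\lvert_{[a_1,b_k]}$ joining them. Since $P_n$ is a geodesic, its segment between $P(a_1)$ and $P(b_k)$ has $D_h$-length $D_h(P(a_1),P(b_k))$ and contains the arc $P\lvert_{[a_1,b_k]}$, which is itself a path of that length between the same two points. The two therefore coincide.

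Consequently $P_n\setminus P$ and $P\setminus P_n$ consist only of the initial and final portions. Hausdorff convergence, together with the length identities $D_h(P_n(0),P(a_1))\to a_1<\delta$ and the analogous one at the other end, shows that these portions have small $D_h$-diameter. They therefore lie in $B_\varepsilon(u)\cup B_\varepsilon(v)$, by the continuity of $D_h$ with respect to the Euclidean metric. This contradicts the assumption, and proves the proposition.
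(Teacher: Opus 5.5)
Your proposal assumes the hard part. The paper gives no proof of this statement; it imports it from \cite[Theorem~2]{BK25} and then uses it as an input. The trapping idea is natural, but your Step~1 is circular.

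\textbf{The $\scX$ input presupposes the statement.} Proposition~35 of \cite{BK25}, as recalled in Proposition~\ref{lem:G}, produces an $\scX$ only on geodesics $\Gamma_{\bz,w}$ that start at a grid point $\bz\in\mathcal{H}_n$. It gives one such configuration, at times in $(\varphi,\psi)$ near $\bz$, and says nothing about an arbitrary geodesic $P$. The fact in Section~\ref{subsec:outline} that you rely on (every $P\lvert_{[s,t]}$ passes through $\scX$s) is attributed there to \cite[Theorem~2, Proposition~35]{BK25}, so it already uses strong confluence. The paper makes this explicit in the proof of Proposition~\ref{prop:lower_bound}: the $\scX$ on a grid geodesic $\Gamma_{z_n,w_n}$ is moved onto $P$ by invoking Proposition~\ref{prop:strong_confluence} to get $\Gamma_{z_n,w_n}\lvert_{[\varphi,\psi]}\subseteq P\lvert_{(s,t)}$.

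\textbf{The chained covering is unavailable.} Step~1 needs every interior point of $P$ to lie in the relative interior of some overlap segment. This is not among the quoted results of \cite{BK25}, and the paper remarks that the segments $P_-\cap P_+$ a priori cover only a vanishing fraction of the geodesic. Without the chaining, Step~3 breaks. Knowing $P_n\supseteq I_1\cup I_k$ only gives two geodesics between the same pair of points of $P$. Identifying them requires Proposition~\ref{prop:uniqueness_of_geodesics}, which the paper presents as a consequence of strong confluence.

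\textbf{What is actually missing.} The step you flag as the main obstacle is the comparatively routine one. What you need is a non-circular mechanism that forces an arbitrary geodesic, and the $P_n$ near it, to follow grid or typical geodesics through their $\scX$s. You also need to rule out geodesic bigons between those $\scX$s, since finitely many disjoint $I_j$ do not prevent $P_n$ and $P$ from differing between them. This transfer is precisely what \cite[Theorem~2]{BK25} supplies.

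\textbf{A secondary problem in Step~2.} Uniqueness of $P_\pm$ alone cannot handle a geodesic that meets the incoming arm of $P_+$ and the outgoing arm of $P_-$. The phrase ``switching sides only through the common segment'' is not a proof. The correct argument runs as follows.
\begin{enumerate}
\item Hausdorff convergence of geodesics upgrades to uniform convergence of their $D_h$-length parametrisations, up to orientation.
\item A winding-number argument using item~(3) of Definition~\ref{def:chi_configuration} then shows that $P_n$ meets both transversal arcs $\Gamma_{u_+,u}\cup\Gamma_{u_-,u}$ and $\Gamma_{v,v_+}\cup\Gamma_{v,v_-}$.
\item In each of the four cases, the two hitting points lie on one of the unique geodesics $P_+,P_-,\Gamma_{u_+,v_-},\Gamma_{u_-,v_+}$. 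Each of these contains $\Gamma_{u,v}$ between the two points.
\item The exchange argument then yields $\Gamma_{u,v}\subseteq P_n$.
\end{enumerate}
This is exactly why item~(2) of the definition is imposed.
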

The above result is often very useful in practice as it allows us to often establish properties holding for all geodesics $P$ by instead choosing a sequence of typical geodesics $P_n$ converging to $P$ and only examining the path properties of these typical geodesics. We also mention a consequence of Proposition~\ref{prop:strong_confluence} which shall be useful to us.
\begin{proposition}(\cite[Proposition~5]{BK25})
\label{prop:uniqueness_of_geodesics}
Fix $\gamma \in (0,2)$ and consider a whole-plane GFF $h$ and the associated LQG metric $D_h$. Almost surely, simultaneously for all $D_h$-geodesics $P: [0,T] \to \CC$ and $0<s<t<T$, $P|_{[s,t]}$ is the unique $D_h$-geodesic between $P(s)$ and $P(t)$.  
\end{proposition}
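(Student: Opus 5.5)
The strategy is to argue by contradiction and to turn a second geodesic between $P(s)$ and $P(t)$ into a sequence of geodesics that converges to $P$ in the Hausdorff sense but differs from $P$ away from its endpoints. This contradicts Proposition~\ref{prop:strong_confluence}. We work on the almost sure event of Proposition~\ref{prop:strong_confluence}; the claim is then deterministic on that event, so it holds simultaneously for all $P,s,t$.

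\emph{Step 1: reduction to a bubble.} Suppose $P\colon[0,T]\to\CC$ is a $D_h$-geodesic, $0<s<t<T$, and $P'\neq P|_{[s,t]}$ is another $D_h$-geodesic from $P(s)$ to $P(t)$. Then the concatenation
\[
Q:=P|_{[0,s]}\cup P'\cup P|_{[t,T]}
\]
is again a $D_h$-geodesic from $P(0)$ to $P(T)$, since its length is $s+(t-s)+(T-t)=T=D_h(P(0),P(T))$. Let $s'\in[s,t)$ be the last time up to which $P'$ agrees with $P$, and let $t'$ be the first time after that at which $P'$ rejoins $P$. Between $P(s')$ and $P(t')$ the two paths then form a bubble: a Jordan curve $\mathcal{B}=P|_{[s',t']}\cup P'|_{[s',t']}$ enclosing an open region $O$. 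The key geometric fact is that some point $x\in P'\setminus P$ on this bubble lies at positive Euclidean distance $\eta$ from $P(0)$ and from $P(T)$. This uses $s>0$ and $t<T$, together with the fact that a geodesic is injective.

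\emph{Step 2: constructing the approximating sequence.} We want geodesics $P_n$ whose endpoints converge to $P(0),P(T)$, which converge to $P$ in the Hausdorff sense, and each of which contains a point within distance $o(1)$ of $x$ but uses neither $P$ nor $Q$ exactly. There are two candidate constructions.
\begin{itemize}
\item[(a)] Take points $u_n\to P(s'+\delta_n)$ approaching the $P|_{[s',t']}$-side of the bubble from inside $O$, together with $v_n\to P(T)$. For each such pair, pick any geodesic $\Gamma_{u_n,v_n}$, and concatenate it with $P|_{[0,s'+\delta_n]}$ when this concatenation is a geodesic.
\item[(b)] More robustly, use that $P|_{[0,s]}\cup P'$ and $P$ are both geodesics from $P(0)$. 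Take $v_n\to P(T)$ chosen on the side of $P$ opposite to the bubble. By planarity and the non-crossing of geodesics emanating from a common point, every subsequential Hausdorff limit of $\Gamma_{P(0),v_n}$ is a geodesic from $P(0)$ to $P(T)$ that is sandwiched between $P$ and $Q$.
\end{itemize}
In construction (b), the sandwiching forces every subsequential limit to be either $P$ itself or a geodesic that still differs from $P$ inside $\overline O$. By playing the two sides of the bubble against each other, we aim to produce one sequence $P_n\to P$ together with a geodesic $\tilde P$ such that $P_n\to\tilde P$ as well and $\tilde P\cap O\neq\emptyset$. The contradiction would then come from comparing the two limits.

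\emph{Step 3: conclusion, and the main obstacle.} Once a sequence $P_n\to P$ with $P_n$ meeting $B_{\eta/2}(x)\setminus P$ is in hand, Proposition~\ref{prop:strong_confluence} with $\varepsilon<\eta/2$ yields $(P_n\setminus P)\cap B_{\eta/2}(x)=\emptyset$ for all large $n$, which is the contradiction. So $P'=P|_{[s,t]}$. The routine parts are Step~1 and Step~3. The genuine difficulty is Step~2: producing geodesics that converge to $P$ yet are forced to pass through the bubble region away from the endpoints. Constant or alternating sequences built from $P$ and $Q$ do not converge. I would first try the planar monotonicity argument in (b), exploiting that the bubble has a definite side relative to $P$. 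If that fails, I would try a direct compactness argument that selects, among geodesics between points near $P(0),P(T)$, ones that hug $P$ except near the bubble.
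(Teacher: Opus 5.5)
Steps 1 and 3 are routine. The entire content of the statement lies in Step 2, which you do not carry out, and the mechanism you sketch for it cannot work.

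A useful diagnostic: Steps 1 and 3 never use $0<s<t<T$ in an essential way. Positivity of $\eta$ holds for any $x\in P'\setminus\{P(0),P(T)\}$, so these steps apply verbatim when $P'$ is a second geodesic between $P(0)$ and $P(T)$ themselves. Such pairs of points exist (the paper stresses that $\Gamma_{z,w}$ need not be unique for exceptional $z,w$), while Proposition~\ref{prop:strong_confluence} holds regardless. Hence Step 2 would have to exploit the extension of $P$ beyond $P(s)$ and $P(t)$ substantively, and neither of your constructions does.

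More basically, Proposition~\ref{prop:strong_confluence} only constrains geodesics that are already Hausdorff-close to $P$. Your bubble consists of the arcs $P|_{[s',t']}$ and $P'|_{[s',t']}$, which are at positive Hausdorff distance. Any sequence $P_n\to P$ therefore eventually avoids a neighbourhood of the middle of the $P'$-arc. If $\mathrm{dist}(x,P)>\eta/2$, the sequence required in Step 3 cannot exist at all. Otherwise you need a small deviation of $P_n$ off $P$, which the distant arc does nothing to produce.

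Your two constructions also fail on their own terms:
\begin{itemize}
\item Construction (a) is not well defined. Since $u_n\in O$ is not on $P$, the concatenation $P|_{[0,s'+\delta_n]}\cup\Gamma_{u_n,v_n}$ is not even a path, let alone a geodesic.
\item In (b), near the endpoint $P(T)$ there is no well-defined side of $P$. Nothing places limits of $\Gamma_{P(0),v_n}$ between $P$ and $Q$. Non-crossing of geodesics from $P(0)$ would require uniqueness of geodesics from $P(0)$ to the crossing points, and this fails here: $P(0)$ is joined to $P(t')$ by two geodesics.
\item A Hausdorff-convergent sequence has a unique limit, so ``$P_n\to P$ and $P_n\to\tilde P$'' forces $\tilde P=P$. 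No comparison of limits is available.
\end{itemize}

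What is missing is an ingredient that excludes macroscopic bubbles with interior endpoints. Since Proposition~\ref{prop:strong_confluence} only controls nearby geodesics, it must be combined with something else. One workable reduction uses that a.s.\ the geodesic between any two points of $\QQ^2$ is unique. Your own concatenation argument from Step 1 then shows that every sub-segment of a geodesic that is unique between its endpoints is itself unique between its endpoints. So it suffices to find $z,w\in\QQ^2$ with $P|_{[s,t]}\subseteq\Gamma_{z,w}$. Strong confluence delivers this as soon as $\Gamma_{z_n,w_n}\to P|_{[s_1,t_1]}$ for some $0<s_1<s<t<t_1<T$; this is where the room on both sides of $[s,t]$ is used.

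The hard point is forcing the rational geodesics to converge to $P|_{[s_1,t_1]}$ rather than to another geodesic with the same endpoints. In this paper that identification (in the proofs of Propositions~\ref{prop:lower_bound} and~\ref{prop:upper_bound}) is made by invoking the very statement at hand, so it cannot be borrowed. The paper gives no argument of its own; it imports the statement from \cite{BK25} as a consequence of strong confluence.
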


\section{Proof of the main result}

In this section, we will complete the proof of Theorem~\ref{thm:1} by proving Propositions~\ref{prop:lower_bound} and ~\ref{prop:upper_bound}. Throughout the rest of the section, we shall always work with a fixed $\gamma \in (0,2)$ and a whole-plane GFF $h$ normalized such that $h_1(0) = 0$. As explained in Section~\ref{subsec:outline}, we will need that any $D_h$-geodesic emanating at a typical point passes through a significant number of $\scX$s as depicted in Figures~\ref{fig:two-panels}, \ref{fig:chi-on-geod}. Moreover, we will require that some additional conditions hold which guarantee that items~\eqref{it:euclidean_lower}-\eqref{it:max_min_thickness_lower} in the statement of Proposition~\ref{prop:lower_bound} hold for the $D_h$-geodesic passing through the corresponding $\scX$s. This will be achieved via a result (Proposition~\ref{lem:G}) taken from \cite{BK25} and applied to a family of well-chosen events $\{H_r(z)\}_{z\in \CC, r>0}$ introduced later in \eqref{eq:3}.

As a matter of notation, for any family of events $\{H_r(z)\}_{z \in \CC,r>0}$ measurable with respect to $\sigma(h)$, we say that $H_r(z)$ is translation and scale invariant if the occurrence of $H_r(z)$ for the field $h$ is the same as the occurrence of the event $H_1(0)$ for the field $z' \mapsto h(z+rz') - h_r(z)$. The events $H_r(z)$ that we will consider in this section will always be \textbf{translation and scale invariant} in the above sense.

We shall now give a precise definition of the configuration $\scX$ that we are going to use. Recall that for all $z,w \in \CC$, all $D_h$-geodesics from $z$ to $w$ are necessarily simple curves, and so the left and right sides of the geodesics are defined as collections of prime ends (see the notational comments from the introduction).

\begin{definition} \label{def:chi_configuration} 
  For distinct points $u_+,v_+,u_-,v_-\in \CC$, the event $\scX_{u_-,v_-}^{u_+,v_+}$ is said to occur if the following hold.
  \begin{enumerate}
  \item The $D_h$-geodesics $P_+,P_-$ from $u_+$ to $v_+$ and $u_-$ to $v_-$ respectively are unique. Moreover, $P_+\cap P_-$ is non-empty and is a non-trivial simple path, whose (distinct) starting and ending points are $u$ and $v$ respectively.
  \item There is a unique $D_h$-geodesic $\Gamma_{u_+,v_-}$ (resp.\ $\Gamma_{u_-,v_+}$) and this is equal to the concatenation of $\Gamma_{u_+,u},\Gamma_{u,v},\Gamma_{v,v_-}$ (resp.\ $\Gamma_{u_-,u},\Gamma_{u,v},\Gamma_{v,v_+}$).
  \item The geodesics $\Gamma_{u_-,u},\Gamma_{v,v_-}$ (resp.\ $\Gamma_{u_+,u}, \Gamma_{v,v_+}$) lie to the right (resp.\ left) of $P_+$ (resp.\ $P_-$).
  \end{enumerate}
\end{definition}

We refer the reader to Figure~\ref{fig:setup1} for a depiction of the event $\scX^{u_+,v_+}_{u_-,v_-}$. Next, we state the main result from \cite{BK25} that we shall require-- the original result appears as \cite[Propositions 23, 35]{BK25}, but the version we state allows for demanding some additional conditions (encapsulated by a family of events $\{H_r(z)\}_{z\in \CC,r>0}$) on the geodesics $P_+,P_-$ forming the $\scX$, and this general version is discussed in \cite[Remarks 24, 36]{BK25}. As notation, we shall use $\pi(\mathtt{h})$ to denote a field $\mathtt{h}$ viewed modulo an additive constant, in the sense that for any global constant $C$, we have $\pi(\mathtt{h}+C)=\mathtt{h}$.

\begin{figure}
  \centering
  \includegraphics[width=\linewidth]{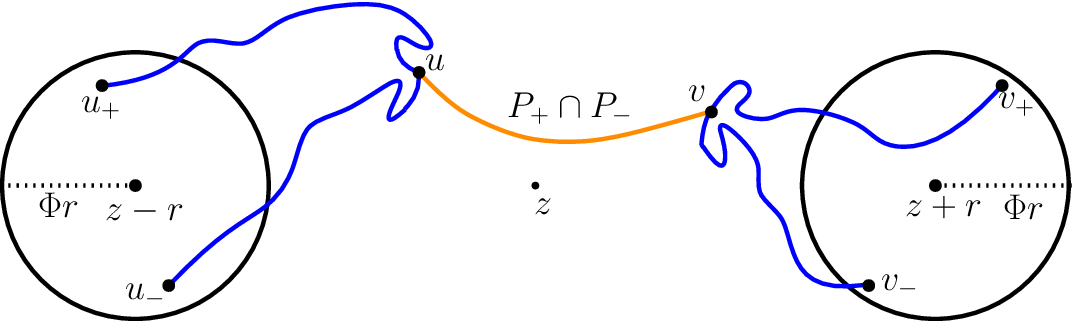}
  \caption{The setup in Proposition~\ref{lem:G}: The blue paths terminating at $u$ represent the geodesics $\Gamma_{u_+,u}$ and $\Gamma_{u_-,u}$, while the blue paths emanating at $v$ represent the geodesics $\Gamma_{v,v_+}$ and $\Gamma_{v,v_-}$ in Definition~\ref{def:chi_configuration}. The orange path represents the intersection of the geodesics $\Gamma_{u_+,v_+}$ and $\Gamma_{u_-,v_-}$ in Definition~\ref{def:chi_configuration}. If the event $\scX^{u_+,v_+}_{u_-,v_-}\cap H_r(z)$ occurs, items~\eqref{eq:H1}-\eqref{eq:H4} hold for the orange path, and so the same is true for any $D_h$-geodesic that contains the orange path. The original source of this figure is \cite[Figure 5]{BK25}.}
  \label{fig:setup1}
\end{figure}

\begin{proposition}
  \label{lem:G}

Fix $\Phi \in (0,1/10)$ and $p_1 \in (0,1)$. There exists a translation and scale invariant family of deterministic points $\{u_-,u_+,v_-,v_+\}_{z \in \CC, r>0}$ satisfying $u_+,u_- \in B_{\Phi r}(z-r)$ and $v_+,v_- \in B_{\Phi r}(z+r)$ such that with $P_+,P_-$ defined by $P_+=\Gamma_{u_+,v_+}, P_-=\Gamma_{v_-,v_+}$, suppose we have a translation and scale invariant family of events $\{H_r(z)\}_{z\in \CC, r>0}$ which are measurable with respect to $\sigma(P_-\cup P_+, \cap_{\varepsilon>0}\sigma( \pi(h)\lvert_{B_{\varepsilon}(P_-\cup P_+)}))$ viewed modulo an additive constant and satisfy $\PP(H_r(z))\geq p_1$, then there exist constants $A > 1, \rho \in (0,1)$ depending only on $p_1$ and constants $c_1,c_2$ depending only on $\gamma$ such the following is true.

Let $U \subseteq \CC$ be a bounded open set. Set $\varepsilon_n = 2^{-n}$ for all $n \in \NN$. With $\mathcal{H}_n = ((\varepsilon_n / 2)^{1/c_1} \ZZ^2) \cap U$, for all $\beta >0$, there exist (random) $0<\varphi < \psi < \beta /20$ depending only on $U$ and $\beta$ such that we have the following for all $n \in \NN$ large enough. For all $\bz \in \mathcal{H}_n$ and all $w \in U$ satisfying $D_h(\bz,w) \geq \beta$, and every $D_h$-geodesic $\Gamma_{\bz,w}$ from $\bz$ to $w$, there exists $(z,r) \in \CC \times [\varepsilon_n^{1/(2c_2)} , \varepsilon_n^{1/(4c_2)}]$ satisfying the following properties.
\begin{enumerate}
\item There exists $z' \in \partial B_r(z)$ such that for the points $u_-,u_+,v_-,v_+$ corresponding to $z'$ and the radius $\rho r/A^2$, the event $\scX^{u_+,v_+}_{u_-,v_-} \cap H_{\rho r/A^2}(z')$ occurs.
    \item With $P_+ = \Gamma_{u_+,v_+}, P_- = \Gamma_{u_-,v_-}$ being the unique $D_h$-geodesics corresponding to the points $u_-,u_+,v_-,v_+$ associated to the point $z'$ and the radius $\rho r/A^2$, we have $P_-\cup P_+\subseteq B_{\rho r/(4A)}(z')$ and further there exists $\varphi<\tau^- < \tau^+<\psi$ such that $P_- \cap P_+ = \Gamma_{\bz,w}|_{[\tau^-,\tau^+]}$.
\item $P_+$ (resp.\ $P_-$) intersects $\Gamma_{\bz,w}$ on its left (resp.\ right) side.
\end{enumerate}

\end{proposition}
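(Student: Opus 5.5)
The plan follows the multi-scale argument of \cite[Propositions~23, 35]{BK25}, carrying the extra events $H_r(z)$ along. It has three stages: a positive-probability statement at a single scale, a multi-scale union bound, and a deterministic "forcing" argument.

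\textbf{Stage 1: one scale, positive probability.} By translation and scale invariance it suffices to work with $(z,r)=(0,1)$. I would use geodesic confluence at a typical point (as in \cite{GPS20,GM20}) to pick points $u_\pm\in B_{\Phi}(-1)$ and $v_\pm\in B_\Phi(1)$ such that $P_+=\Gamma_{u_+,v_+}$ and $P_-=\Gamma_{u_-,v_-}$ are unique, merge, share a nontrivial segment, and then separate. The side conditions (2)--(3) of Definition~\ref{def:chi_configuration} come from placing $u_+,v_+$ slightly "above" and $u_-,v_-$ slightly "below" the typical geodesic $\Gamma_{-1,1}$.

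The key quantitative point is to choose these points so that $\PP(\scX^{u_+,v_+}_{u_-,v_-})\ge 1-p_1/2$. Then $\PP(\scX\cap H_1(0))\ge p_1/2$, and this lower bound depends only on $p_1$. Since $H_r(z)$ is measurable with respect to $P_\pm$ and the field near $P_\pm$ modulo constants, it fits into the locality framework used below. I would also add to the good event the following requirements, each of which holds with probability close to one once the enlargement parameters $A$ and $\rho$ are chosen suitably depending on $p_1$:
\begin{itemize}
\item $P_-\cup P_+\subseteq B_{\rho/(4A)}$ at the rescaled scale;
\item the whole configuration is determined by $h$ restricted to an annulus, modulo an additive constant;
\item a "shielding" condition holds: any $D_h$-geodesic crossing the annulus between the inner and outer scale must hit both $P_+$ and $P_-$.
\end{itemize}

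\textbf{Stage 2: many scales, union bound.} Fix $\bz\in\mathcal H_n$. Consider the roughly $c\log\varepsilon_n^{-1}$ dyadic scales $r\in[\varepsilon_n^{1/(2c_2)},\varepsilon_n^{1/(4c_2)}]$, together with the centers $z$ at distance of order $r$ from $\bz$ along the relevant directions. The good events at well-separated scales depend on disjoint annuli, modulo constants. The near-independence of the GFF across concentric annuli (the standard iteration lemma from \cite{GM21}-type arguments) then shows that the probability that no scale is good is at most $\varepsilon_n^{K}$, where $K$ can be made large by choosing $c_2$ appropriately. A union bound over $|\mathcal H_n|\lesssim\varepsilon_n^{-2/c_1}$ grid points, with $c_1,c_2$ chosen so that the resulting series is summable, followed by Borel--Cantelli, shows that for all large $n$, every $\bz\in\mathcal H_n$ has a good scale $r$. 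Because $r\le\varepsilon_n^{1/(4c_2)}$ and $D_h$ is H\"older continuous with respect to the Euclidean metric, any geodesic $\Gamma_{\bz,w}$ with $D_h(\bz,w)\ge\beta$ leaves $B_{Cr}(\bz)$ within $D_h$-time $\psi<\beta/20$. It also needs at least time $\varphi>0$ to reach the relevant annulus. This is where the random constants $\varphi,\psi$ come from.

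\textbf{Stage 3: topological forcing.} On the good event, $\Gamma_{\bz,w}$ must cross the annulus around $z'$, so by the shielding condition it hits both $P_+$ and $P_-$. Two geodesics that share two points coincide between them, by Proposition~\ref{prop:uniqueness_of_geodesics} and the uniqueness in Definition~\ref{def:chi_configuration}. Combining this with the crossing structure (items (2)--(3) of that definition: the unique geodesics $\Gamma_{u_+,v_-}$ and $\Gamma_{u_-,v_+}$ pass through $\Gamma_{u,v}$), I would show that $\Gamma_{\bz,w}$ contains $P_+\cap P_-=\Gamma_{u,v}$, with $P_+$ meeting it on the left and $P_-$ on the right. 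This gives $P_-\cap P_+=\Gamma_{\bz,w}|_{[\tau^-,\tau^+]}$ with $\varphi<\tau^-<\tau^+<\psi$.

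\textbf{Main obstacle.} I expect Stages 1 and 3 together to be the hardest part. One must build the shield and the $\scX$ so that they hold with probability at least $1-p_1/2$ and are still locally determined. One must also rule out the possibility that $\Gamma_{\bz,w}$ touches $P_\pm$ only at points outside their overlap. I would handle the latter first through planarity: the separation points $u,v$ and the side conditions force any path entering the region bounded by $P_+\cup P_-$ to exit through $\Gamma_{u,v}$.
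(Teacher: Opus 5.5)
\textbf{Verdict: there is a genuine gap.} The paper does not reprove this statement; it quotes it from \cite[Propositions~23, 35 and Remarks~24, 36]{BK25}. Your outline is therefore a reconstruction of that argument, and as written it fails at two points.

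\textbf{First gap: where the $\scX$ sits.} In Stage~2 you take centres $z$ at distance of order $r$ from $\bz$. Then $P_-\cap P_+\subseteq B_{Cr}(\bz)$, so $\tau^+=D_h(\bz,\Gamma_{\bz,w}(\tau^+))\le\sup_{x\in B_{Cr}(\bz)}D_h(\bz,x)$. This tends to $0$ uniformly in $\bz\in U$, because $r\le\varepsilon_n^{1/(4c_2)}$.

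\emph{Consequence for $\varphi$.} No $\varphi>0$ fixed before $n$ can satisfy $\varphi<\tau^-$, yet the statement requires $\varphi$ to depend only on $U$ and $\beta$. Your $\varphi$, the time to reach an annulus of radius $\asymp r$, degenerates as $n\to\infty$. This is not cosmetic: the proof of Proposition~\ref{prop:lower_bound} uses $\Gamma_{z_n,w_n}|_{[\varphi,\psi]}\subseteq P|_{(s,t)}$, i.e.\ an $\scX$ in the bulk, away from the moving start point $z_n$.

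\emph{What the fix costs.} The annuli must be centred at points $z$ at macroscopic $D_h$-distance from both $\bz$ and $w$. That requires an extra union bound over a grid of $\asymp\varepsilon_n^{-1/c_2}$ centres at the smallest scale, against only $\asymp n/c_2$ available dyadic scales. Both counts scale like $1/c_2$, so tuning $c_2$ cannot help. The single-annulus good event must itself have probability close to one, not merely $\gtrsim p_1/2$. This is why $A,\rho$ depend on $p_1$ in the statement (many independent trials inside one annulus), while $c_1,c_2$ depend only on $\gamma$. Your Stage~2 instead lets $c_2$ absorb the $p_1$-dependence.

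\textbf{Second gap: the forcing in Stages~1 and~3.}

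\emph{The topology does not work.} $P_+\cup P_-$ is a tree (two arcs glued along $\Gamma_{u,v}$) with connected complement. There is no ``region bounded by $P_+\cup P_-$'', and no planarity argument pushes a path through $\Gamma_{u,v}$. Moreover, ``$\Gamma_{\bz,w}$ hits both $P_+$ and $P_-$'' is compatible with a single crossing point on $\Gamma_{u,v}$, or with touching only the two arms at the $u$-end. Neither gives $\Gamma_{u,v}\subseteq\Gamma_{\bz,w}$ or conclusion~(3).

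\emph{What does work deterministically.} If $\Gamma_{\bz,w}$ meets an arm at the $u$-end and an arm at the $v$-end, then item~(2) of Definition~\ref{def:chi_configuration} together with Proposition~\ref{prop:uniqueness_of_geodesics} gives $\Gamma_{u,v}\subseteq\Gamma_{\bz,w}$. Conclusion~(3) further needs $\Gamma_{\bz,w}$ to arrive at $u$ and leave $v$ strictly between the arms.

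\emph{The probability bound fails.} Arranging this is a metric bottleneck event. You give no reason it has probability close to one; such events, like the $E_r(z)$ of \cite{GPS20}, are typically only shown to have positive probability. So the bound ``$\PP(\scX\cap H)\ge p_1/2$ and everything else has probability near one'' gives no lower bound on the intersection.

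\emph{The missing step.} It is presumably what the hypothesis on $H$ is tailored for. Create the bottleneck by adding to $h$ a smooth function that is constant, and minimal, on a neighbourhood of $P_+\cup P_-$. This leaves $P_\pm$, the $\scX$ structure and $\pi(h)$ near $P_\pm$ (hence $H$) unchanged, and absolute continuity then transfers a positive probability.

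\emph{The Stage~1 bound is unsupported.} Deterministic points cannot be placed ``above'' or ``below'' the random geodesic $\Gamma_{-1,1}$. The side conditions are local, scale-invariant events near $\pm1$, and you give no way to push their probability to one, so $\PP(\scX)\ge 1-p_1/2$ is not established.
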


\begin{proof}
    The statement of the proposition follows from combining \cite[Propositions 23, 35]{BK25} with \cite[Remarks 24, 36]{BK25}.
\end{proof}

Next, we define the events $H_r(z)$ that we shall use when invoking Proposition~\ref{lem:G}. Let $\delta>0,\alpha \in [-2,2]$ and fix $z \in \CC, r>0$. For now, we work with an unspecified $\Phi \in (0,1/10)$, and this will be chosen later in Lemma~\ref{lem:1} below in a way that depends only on $\delta$ and $\alpha$. Thereafter,  $A>1$ and $\rho \in (0,1)$ shall be the constants chosen in the statement of Proposition~\ref{lem:G} corresponding to the above choice of $\Phi$ and with $p_1 = 1/2$. Let also $P_+$ (resp.\ $P_-$) be the (almost surely unique) $D_h$-geodesic from $u_+$ to $v_+$ (resp.\ $u_-$ to $v_-$), where $u_+,u_-,v_+,v_-$ are the points corresponding to the point $z$ and radius $r$ as in the statement of Proposition~\ref{lem:G}. With $\Delta^{\mathrm{Euc}},\Delta_{\alpha}^{\mathrm{Euc}},\Delta_{\alpha}^{\mathrm{LQG}},\chi,\chi',\alpha_{\mathrm{min}}$, and $\alpha_{\mathrm{max}}$ being the deterministic constants as in the statement of Proposition~\ref{prop:zero_one_law}, consider the events $H_{r,i}(z)$ for $i=1,2,3,4$, defined by 
\begin{align}
  &H_{r,1}(z)=\{\dim_{\mathrm{Euc}} (P_+\cap P_-)\geq \Delta^{\mathrm{Euc}}-\delta\}, \label{eq:H1}\\
 &H_{r,2}(z)=\{\dim_{\mathrm{Euc}}((P_+\cap P_-)\cap \cT_h^{\alpha})\geq  \Delta^{\mathrm{Euc}}_{\alpha}-\delta, \dim_{\mathrm{LQG}}((P_+\cap P_-)\cap \cT^\alpha_h)\geq \Delta^{\mathrm{LQG}}_{\alpha}-\delta\},\label{eq:H2}\\
 &H_{r,3}(z)=\{\chi(P_+\cap P_-)\leq \chi+\delta, \chi'(P_+\cap P_-)\geq \chi'-\delta\},\label{eq:H3}\\
  &H_{r,4}(z)=\left\{\alpha_{\mathrm{max}}(P_+ \cap P_-) \geq \alpha_{\mathrm{max}}-\delta, \alpha_{\mathrm{min}}(P_+ \cap P_-) \leq \alpha_{\mathrm{min}}+\delta\right\}\label{eq:H4}.
\end{align}
Finally, we define
\begin{equation}
  \label{eq:3}
  H_r(z) = \bigcap_{i=1}^4 H_{r,i}(z).
\end{equation}
Note that $H_r(z)$ as defined above is measurable with respect to $\sigma(P_-\cup P_+, \cap_{\varepsilon>0}\sigma( \pi(h)\lvert_{B_{\varepsilon}(P_-\cup P_+)}))$. Indeed, $H_{r,1}(z)$ is clearly measurable with respect to $\sigma(P_1\cap P_+)$. Further, that $H_{r,2}(z)$ and $H_{r,4}(z)$ are measurable with respect to $\sigma(P_-\cup P_+, \cap_{\varepsilon>0} \sigma( \pi(h)\lvert_{B_{\varepsilon}(P_-\cup P_+)}))$ follows by noting that adding a global constant does not affect the thicknesses of points for a GFF. Finally, to obtain the measurability of $H_{r,3}(z)$, we note that conditional on geodesics $P_+, P_-$, replacing the field $h$ in a neighbourhood of $P_+\cup P_-$ by $h+C$ would simply lead to a linear reparametrisation of $P_+,P_-$ when parametrised to have unit speed with respect to the new field, thereby not affecting any H\"older continuity exponents. Furthermore, the translation and scale invariance of the law of $h$ (see Section~\ref{subsec:gff}), the definition of $u_+,u_-,v_+,v_-$ as a translational and scale invariant family of points with respect to $z\in \CC, r>0$, combined with the conformal covariance of the $\gamma$-LQG metric (\cite[Theorem~1.2]{GM21}), implies that the family of events $\{H_r(z)\}_{z \in \mathbb{C}, r>0}$ is scale and translation invariant.

In order to invoke Proposition \ref{lem:G}, we shall show that provided that $\Phi$ therein is chosen to be small enough depending on $\delta$, $H_r(z)$ occurs with probability strictly bounded away from $0$. Before doing so, we state and prove two useful lemmas. Regarding notation, for a path $\eta\colon [0,T]\rightarrow \CC$ and $\varepsilon \in (0, |\eta(0)-\eta(T)|/2)$, we let $\tau_{\varepsilon}^-(\eta)$ be the last time that $\eta$ intersects $\partial B_{\varepsilon}(\eta(0))$ and let $\tau_{\varepsilon}^+(\eta)$ be the first time that $\eta$ intersects $\partial B_{\varepsilon}(\eta(T))$.

\begin{lemma}
  \label{lem:2}
For all $\varepsilon \in (0,1/2)$, we locally set $\tau_{\varepsilon}^- = \tau_{\varepsilon}^-(\Gamma_{-1,1})$ and $\tau_{\varepsilon}^+ = \tau_{\varepsilon}^+(\Gamma_{-1,1})$, %
  Fix $\alpha \in [-2,2]$ and let $\Delta^{\mathrm{Euc}}, \Delta_{\alpha}^{\mathrm{Euc}},\Delta_{\alpha}^{\mathrm{LQG}},\chi,\chi',\alpha_{\mathrm{max}}$, and $\alpha_{\mathrm{min}}$ denote the deterministic constants from the statement of Proposition~\ref{prop:zero_one_law}. Then, almost surely, the following hold as $\varepsilon \to 0$.
  \begin{enumerate}
  \item \label{it:euc_dim_convergence}
  $\dim_{\mathrm{Euc}}(\Gamma_{-1,1}\lvert_{[\tau_\varepsilon^-,\tau_\varepsilon^+]})\rightarrow \Delta^{\mathrm{Euc}}$,
  \item \label{it:euc_dim_thick_points_convergence}
  $\dim_{\mathrm{Euc}}(\Gamma_{-1,1}\lvert_{[\tau_\varepsilon^-,\tau_\varepsilon^+]} \cap \cT^\alpha_h)\rightarrow  \Delta^{\mathrm{Euc}}_{\alpha}$,
  \item \label{it:lqg_dim_thick_points_convergence}
  $\dim_{\mathrm{LQG}}(  \Gamma_{-1,1}\lvert_{[\tau_\varepsilon^-,\tau_\varepsilon^+]}\cap \cT^\alpha_h)\rightarrow \Delta^{\mathrm{LQG}}_{\alpha}$,
  \item \label{it:upper_holder_convergence}
  $\chi(\Gamma_{-1,1}\lvert_{[\tau_\varepsilon^-,\tau_\varepsilon^+]})\rightarrow \chi$,
  \item \label{it:lower_holder_convergence}
  $\chi'(\Gamma_{-1,1}\lvert_{[\tau_\varepsilon^-,\tau_\varepsilon^+]})\rightarrow \chi'$,
  \item \label{it:max_thickness_converegence}
  $\alpha_{\mathrm{max}}(\Gamma_{-1,1}\lvert_{[\tau_\varepsilon^-,\tau_\varepsilon^+]})\rightarrow \alpha_{\mathrm{max}}$,
      \item \label{it:min_thickness_convergence}
      $\alpha_{\mathrm{min}}(\Gamma_{-1,1}\lvert_{[\tau_\varepsilon^-,\tau_\varepsilon^+]})\rightarrow \alpha_{\mathrm{min}}$.
  \end{enumerate}
\end{lemma}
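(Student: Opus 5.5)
The plan is to reduce everything to Proposition~\ref{prop:zero_one_law} applied with $\bz=-1$ and $P=\Gamma_{-1,1}$ (the a.s.\ unique geodesic, parametrised by $D_h$-length on $[0,T]$ with $T=D_h(-1,1)$). That proposition gives the value of each statistic for the \emph{whole} path $P$. Each statistic is either countably stable or defined through a compact exhaustion of $(0,T)$, and the segments $P\lvert_{[\tau_\varepsilon^-,\tau_\varepsilon^+]}$ exhaust the open path $P\lvert_{(0,T)}$. So the statistic of the whole path should be recovered as the limit of the statistics of the segments.

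\textbf{Step 1 (exhaustion).} I first check that $\tau_\varepsilon^-\downarrow 0$ and $\tau_\varepsilon^+\uparrow T$ monotonically as $\varepsilon\downarrow0$, with $\tau_\varepsilon^-<\tau_\varepsilon^+$ for small $\varepsilon$.
\begin{itemize}
\item Monotonicity: $\tau^-_\varepsilon$ is the last exit time from $B_\varepsilon(-1)$, and for $\varepsilon'<\varepsilon$ we have $B_{\varepsilon'}(-1)\subseteq B_\varepsilon(-1)$, so $\tau^-_{\varepsilon'}\le\tau^-_\varepsilon$.
\item Limit: for any $s>0$, $\min_{[s,T]}|P(\cdot)+1|>0$ because $P$ is simple and continuous. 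Hence $\tau^-_\varepsilon<s$ once $\varepsilon$ is below this minimum.
\item The argument for $\tau^+_\varepsilon$ is symmetric.
\end{itemize}
Consequently $\bigcup_{\varepsilon}[\tau_\varepsilon^-,\tau_\varepsilon^+]=(0,T)$ as an increasing union along $\varepsilon=1/k$. Every compact $[a,b]\subseteq(0,T)$ is contained in $[\tau_\varepsilon^-+\eta,\tau_\varepsilon^+-\eta]$ for some $\varepsilon,\eta>0$.

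\textbf{Step 2 (dimensions and extremal thickness).} By monotonicity of the segments in $\varepsilon$, each quantity in items \eqref{it:euc_dim_convergence}--\eqref{it:lqg_dim_thick_points_convergence} is monotone in $\varepsilon$. Countable stability of Euclidean and LQG Hausdorff dimension then gives
\[
\lim_{\varepsilon\to0}\dim(P\lvert_{[\tau_\varepsilon^-,\tau_\varepsilon^+]}\cap S)=\dim(P\lvert_{(0,T)}\cap S),
\]
with $S=\CC$ or $S=\cT^\alpha_h$. Removing the two endpoints does not change the dimension, so the right-hand side equals $\dim(P\cap S)$. That value is identified by items \eqref{it:euclidean_deterministic}, \eqref{it:thick_points_deterministic} of Proposition~\ref{prop:zero_one_law}.

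For items \eqref{it:max_thickness_converegence}, \eqref{it:min_thickness_convergence}, I use that $\alpha_{\max}$ and $\alpha_{\min}$ of a path are defined through its open interior, and that the interiors $P\lvert_{(\tau_\varepsilon^-,\tau_\varepsilon^+)}$ also increase to $P\lvert_{(0,T)}$. The set of $\alpha$ with $P\lvert_{(\tau^-_\varepsilon,\tau^+_\varepsilon)}\cap\cT^\alpha_h\neq\emptyset$ therefore increases to the corresponding set for $P\lvert_{(0,T)}$. Hence the suprema increase to $\alpha_{\max}(P)=\alpha_{\max}$ and the infima decrease to $\alpha_{\min}(P)=\alpha_{\min}$.

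\textbf{Step 3 (H\"older exponents).} By definition \eqref{eq:14}, $\chi(P\lvert_{[\tau^-_\varepsilon,\tau^+_\varepsilon]})=\inf_{\eta>0}\widetilde\chi(P\lvert_{[\tau^-_\varepsilon+\eta,\tau^+_\varepsilon-\eta]})$. Also, $\widetilde\chi$ is non-increasing under enlarging the domain. Therefore
\[
\chi(P\lvert_{[\tau^-_\varepsilon,\tau^+_\varepsilon]})=\inf\{\widetilde\chi(P\lvert_{[a,b]}):[a,b]\subseteq(\tau^-_\varepsilon,\tau^+_\varepsilon)\},
\]
the infimum being over compact subintervals. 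Likewise, $\chi(P)$ is the infimum over all compact $[a,b]\subseteq(0,T)$. By Step 1 every such $[a,b]$ eventually lies in $(\tau^-_\varepsilon,\tau^+_\varepsilon)$. Hence $\chi(P\lvert_{[\tau^-_\varepsilon,\tau^+_\varepsilon]})$ decreases to $\chi(P)=\chi$. The same argument, with suprema in place of infima, shows that $\chi'(P\lvert_{[\tau^-_\varepsilon,\tau^+_\varepsilon]})$ increases to $\chi'(P)=\chi'$. In both cases the value comes from item \eqref{it:optimal_holder_deterministic} of Proposition~\ref{prop:zero_one_law}.

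\textbf{Main obstacle.} The only delicate point is bookkeeping rather than probability. Proposition~\ref{prop:zero_one_law} is stated for the whole geodesic from the fixed point $-1$, so I must make sure each statistic of the whole path is genuinely determined by its interior. For dimensions this holds because points are negligible. For $\alpha_{\max},\alpha_{\min},\chi,\chi'$ it holds by their very definitions, which already discard the endpoints. No input near the endpoint $1$ is needed, since the whole-path statement already accounts for both ends.
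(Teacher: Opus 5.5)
Your proposal is correct and follows the paper's proof. Both exhaust $\Gamma_{-1,1}\setminus\{-1,1\}$ by the increasing segments $\Gamma_{-1,1}|_{[\tau^-_{1/n},\tau^+_{1/n}]}$, use countable stability of Hausdorff dimension for items 1--3 and monotone convergence of the interior-defined statistics for items 4--7, and identify the limits via Proposition~\ref{prop:zero_one_law}. Your Steps 1 and 3 additionally justify the monotonicity of $\tau^\pm_\varepsilon$ and the convergence $\chi(\Gamma_{-1,1}|_{[\tau^-_\varepsilon,\tau^+_\varepsilon]})\to\chi(\Gamma_{-1,1})$ (and its $\chi'$ analogue), which the paper simply asserts.
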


\begin{proof}
To begin, since $\Gamma_{-1,1}$ is almost surely a continuous and simple path, we obtain that
\begin{equation*}
   \Gamma_{-1,1} \setminus \{-1,1\} = \bigcup_{n=1}^{\infty} \Gamma_{-1,1}\lvert_{[\tau_{1/n}^- , \tau_{1/n}^+]},
\end{equation*}
where the union is increasing in $n$. Recall also that in any metric space $(X,d)$, for any increasing sequence of Borel sets $\{U_i\}_{i \in \NN}$, the corresponding Hausdorff dimensions satisfy
\begin{equation}
  \label{eq:incdim}
    \dim(\bigcup_{i=1}^\infty U_i) = \lim_{n \to \infty}  \dim(U_n).
\end{equation}
Now, by Proposition \ref{prop:zero_one_law}, we know that almost surely, $\dim_{\mathrm{Euc}}(\Gamma_{-1,1}) = \Delta^{\mathrm{Euc}}, \dim_{\mathrm{Euc}}(\Gamma_{-1,1} \cap \mathcal{T}_h^{\alpha}) = \Delta_{\alpha}^{\mathrm{Euc}}$ and $\dim_{\mathrm{LQG}}(\Gamma_{-1,1} \cap \mathcal{T}_h^{\alpha}) = \Delta_{\alpha}^{\mathrm{LQG}}$. On combining this with \eqref{eq:incdim}, we immediately obtain items~\eqref{it:euc_dim_convergence},~\eqref{it:euc_dim_thick_points_convergence}, and ~\eqref{it:lqg_dim_thick_points_convergence}.

As for items~\eqref{it:upper_holder_convergence} and ~\eqref{it:lower_holder_convergence}, they follow from combining the fact that almost surely,
\begin{equation*}
    \chi(\Gamma_{-1,1}|_{[\tau_{\varepsilon}^-,\tau_{\varepsilon}^+]}) \to \chi(\Gamma_{-1,1}),\,\,\text{and} \,\, \chi'(\Gamma_{-1,1}|_{[\tau_{\varepsilon}^-,\tau_{\varepsilon}^+]}) \to \chi'(\Gamma_{-1,1}) \,\,\text{as}\,\, \varepsilon \to 0,
\end{equation*}
with the fact that $\chi(\Gamma_{-1,1}) = \chi$ and $\chi'(\Gamma_{-1,1}) = \chi'$ almost surely by Proposition~\ref{prop:zero_one_law}. 

Finally, we prove items~\eqref{it:max_thickness_converegence} and ~\eqref{it:min_thickness_convergence}. Note that almost surely,
\begin{equation*}
    \alpha_{\mathrm{max}}(\Gamma_{-1,1}|_{[\tau_{\varepsilon}^- , \tau_{\varepsilon}^+]}) \to \alpha_{\mathrm{max}}(\Gamma_{-1,1}) \,\, \text{and} \,\, \alpha_{\mathrm{min}}(\Gamma_{-1,1}|_{[\tau_{\varepsilon}^- , \tau_{\varepsilon}^+]}) \to \alpha_{\mathrm{min}}(\Gamma_{-1,1}) \,\, \text{as} \,\, \varepsilon \to 0.
\end{equation*}
Therefore, items~\eqref{it:max_thickness_converegence} and ~\eqref{it:min_thickness_convergence} follow since Proposition~\ref{prop:zero_one_law} implies that $\alpha_{\mathrm{max}}(\Gamma_{-1,1}) = \alpha_{\mathrm{max}}$ and $\alpha_{\mathrm{min}}(\Gamma_{-1,1}) = \alpha_{\mathrm{min}}$ almost surely. 
\end{proof}

\begin{lemma}
  \label{lem:4}
  Fix $\delta>0$ and $\alpha\in [-2,2]$. For every $\nu>0$, there exists a $\Phi\in (0,1/10)$ such that with probability at least $1-\nu$, for all points $x,y\in B_{\Phi}(-1)$, $x',y'\in B_{\Phi}(1)$ and all $D_h$-geodesics $\Gamma_{x,x'},\Gamma_{y,y'}$, we have
    \begin{enumerate}
  \item \label{it:euc_dim_convergence1}
  $\dim_{\mathrm{Euc}}(\Gamma_{x,x'}\cap\Gamma_{y,y'})\geq \Delta^{\mathrm{Euc}}-\delta$,
  \item \label{it:euc_dim_thick_points_convergence1}
  $\dim_{\mathrm{Euc}}( (\Gamma_{x,x'}\cap\Gamma_{y,y'}) \cap \cT^\alpha_h)\geq  \Delta^{\mathrm{Euc}}_{\alpha}-\delta$,
  \item \label{it:lqg_dim_thick_points_convergence1}
  $\dim_{\mathrm{LQG}}( (\Gamma_{x,x'}\cap\Gamma_{y,y'})\cap \cT^\alpha_h)\geq \Delta^{\mathrm{LQG}}_{\alpha}-\delta$,
  \item \label{it:upper_holder_convergence1}
  $\chi(\Gamma_{x,x'}\cap\Gamma_{y,y'})\leq \chi +\delta$,
  \item \label{it:lower_holder_convergence1}
  $\chi'(\Gamma_{x,x'}\cap\Gamma_{y,y'})\geq \chi'-\delta$,
  \item \label{it:max_thickness_converegence1}
  $\alpha_{\mathrm{max}}(\Gamma_{x,x'}\cap\Gamma_{y,y'})\geq \alpha_{\mathrm{max}}-\delta$,
      \item \label{it:min_thickness_convergence1}
      $\alpha_{\mathrm{min}}(\Gamma_{x,x'}\cap\Gamma_{y,y'})\leq \alpha_{\mathrm{min}}+\delta$.
  \end{enumerate}
\end{lemma}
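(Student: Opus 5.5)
We fix $\delta>0$ and $\alpha$. By Lemma~\ref{lem:2}, we first choose $\varepsilon\in(0,1/4)$ such that, with probability at least $1-\nu/2$, the segment $\Gamma_{-1,1}\lvert_{[\tau_\varepsilon^-,\tau_\varepsilon^+]}$ satisfies all seven inequalities of the lemma with $\delta$ in place of the errors. The relevant events are almost sure limits as $\varepsilon\to0$, so the probability of each approximation being $\delta$-good tends to one, and a finite intersection is still close to one. Here we use monotonicity in $\varepsilon$: dimensions and $\alpha_{\max}$ increase as the segment grows, $\alpha_{\min}$ decreases, and the Hölder quantities are monotone in the appropriate direction. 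Consequently, any set $S$ that contains $\Gamma_{-1,1}\lvert_{[\tau_\varepsilon^-,\tau_\varepsilon^+]}$ and is itself a subsegment of a geodesic inherits all the inequalities. For the Hölder exponents this needs some care: $\chi(\cdot)$ of a larger geodesic segment is at most that of a subsegment, and $\chi'$ is at least. This is because a bulk subsegment of a larger segment is contained in its bulk, since restricting the domain only improves Hölder constants on subsets. One should also interpret $\chi(\Gamma_{x,x'}\cap\Gamma_{y,y'})$ as $\chi$ of the geodesic segment $\Gamma_{x,x'}\cap\Gamma_{y,y'}$, which is a segment by uniqueness of geodesic segments (Proposition~\ref{prop:uniqueness_of_geodesics}).

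The heart of the argument is the following confluence claim. For $\Phi$ small enough, with probability at least $1-\nu/2$, for all $x,y\in B_\Phi(-1)$, $x',y'\in B_\Phi(1)$ and all geodesics $\Gamma_{x,x'},\Gamma_{y,y'}$, both geodesics contain $\Gamma_{-1,1}\lvert_{[\tau_\varepsilon^-,\tau_\varepsilon^+]}$. Given this claim, $\Gamma_{x,x'}\cap\Gamma_{y,y'}\supseteq\Gamma_{-1,1}\lvert_{[\tau_\varepsilon^-,\tau_\varepsilon^+]}$. The intersection is again a single geodesic segment: if two geodesics shared two points, the subsegments between them coincide by Proposition~\ref{prop:uniqueness_of_geodesics} applied to interior points. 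Combining with the first step gives all seven items on an event of probability at least $1-\nu$.

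To prove the claim, we argue by contradiction using Proposition~\ref{prop:strong_confluence}. Suppose that for every $\Phi=1/k$, with probability at least $\nu/2$, there exist $x_k,x_k'$ and a geodesic $\Gamma_{x_k,x_k'}$ not containing $\Gamma_{-1,1}\lvert_{[\tau_\varepsilon^-,\tau_\varepsilon^+]}$. These events need not be monotone in $k$ as stated, but they are decreasing, since smaller balls give fewer pairs. Hence with probability at least $\nu/2$ they occur for infinitely many $k$. On that event we extract a subsequence along which $\Gamma_{x_k,x_k'}$ converges in the Hausdorff sense (by Arzelà–Ascoli in $D_h$, with uniformly bounded lengths) to a geodesic from $-1$ to $1$. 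That limit is a.s.\ the unique geodesic $\Gamma_{-1,1}$ \cite{MQ18}. Proposition~\ref{prop:strong_confluence} then says that for large $k$, $\Gamma_{x_k,x_k'}$ and $\Gamma_{-1,1}$ differ only inside $B_{\varepsilon}(-1)\cup B_\varepsilon(1)$. Since $\Gamma_{-1,1}\lvert_{[\tau_\varepsilon^-,\tau_\varepsilon^+]}$ avoids the open balls $B_\varepsilon(\pm1)$, it is contained in $\Gamma_{x_k,x_k'}$, a contradiction. We use the strict inequality here; if needed, replace $\varepsilon$ by $\varepsilon/2$ in the application of strong confluence.

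The main obstacle I expect is the measure-theoretic bookkeeping in this contradiction: making the "bad" events measurable and monotone, and ensuring that the limiting geodesic is indeed $\Gamma_{-1,1}$ uniformly over choices of geodesics. Strong confluence holding simultaneously for all sequences is exactly what handles the arbitrary choice of geodesics.
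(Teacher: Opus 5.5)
Your proposal is correct and follows essentially the same route as the paper. You fix $\varepsilon$ via Lemma~\ref{lem:2}, use geodesic confluence to choose $\Phi$ so that every geodesic from $B_\Phi(-1)$ to $B_\Phi(1)$ contains $\Gamma_{-1,1}\lvert_{[\tau_\varepsilon^-,\tau_\varepsilon^+]}$, and conclude by monotonicity of each statistic under passing to a larger geodesic segment. The differences are minor: the paper cites \cite[Lemma 3.12]{GPS20} for the confluence step (noting it also follows from Proposition~\ref{prop:strong_confluence}), while you derive it from Proposition~\ref{prop:strong_confluence} by a compactness-and-contradiction argument, and you make explicit the monotonicity of $\chi,\chi'$ that the paper leaves implicit.
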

\begin{proof}
  First, we note that as a consequence of Proposition \ref{prop:uniqueness_of_geodesics}, $\Gamma_{x,x'}\cap\Gamma_{y,y'}$, if non-empty, is always a continuous curve. We now take a parameter $\varepsilon>0$ which we shall fix to be small later at the end of the proof, and for now we shall define $\Phi$ in terms of $\varepsilon$. By geodesic confluence (see \cite[Lemma 3.12]{GPS20}\footnote{Alternatively, this can be seen as an immediate consequence of strong confluence (Proposition \ref{prop:strong_confluence}), though this stronger statement is not strictly required here as $-1$ and $1$ are fixed points.}), we can choose $\Phi$ small enough depending on $\nu$ and $\varepsilon$, such that with probability at least $1-\nu/2$, for all geodesics $\Gamma_{x,x'},\Gamma_{y,y'}$ as in the statement of the proposition, we have
  \begin{equation}
    \label{eq:15}
    (\Gamma_{x,x'}\setminus \Gamma_{y,y'}) \cup (\Gamma_{y,y'}\setminus \Gamma_{x,x'})\subseteq B_{\varepsilon}(-1)\cup B_{\varepsilon}(1).
  \end{equation}
  In particular, since $-1\in B_{\Phi}(-1), 1\in B_{\Phi}(1)$, by using the shorthand $\tau_\varepsilon^-=\tau_\varepsilon^-(\Gamma_{-1,1})$ and $\tau_\varepsilon^+=\tau_\varepsilon^+(\Gamma_{-1,1})$, we have
  \begin{equation}
    \label{eq:16}
    \Gamma_{-1,1}\lvert_{[\tau_\varepsilon^-,\tau_\varepsilon^+]}\subseteq \Gamma_{x,x'}\cap \Gamma_{y,y'}.
  \end{equation}

  Thus, in view of \eqref{eq:15}, in order to complete the proof, we need only establish that by choosing $\varepsilon$ small enough, we can ensure that with probability at least $1-\nu/2$, all of the items in the statement of the proposition hold with $\Gamma_{x,x'}\cap\Gamma_{y,y'}$ replaced by $\Gamma_{-1,1}\lvert_{[\tau_\varepsilon^-,\tau_\varepsilon^+]}$. However, this is an immediate consequence of Lemma \ref{lem:2}, and this completes the proof.
\end{proof}

Now we are ready to prove that the events $H_r(z)$ satisfy the conditions required in Proposition \ref{lem:G} provided that $\Phi \in (0,1/10)$ is sufficiently small.

\begin{lemma} \label{lem:1}
Fix $\delta>0,\alpha \in [-2,2]$. Then, there exists a choice of $\Phi \in (0,1/10)$ depending only on $\delta$ and $\alpha$, such that the family of events $\{H_r(z)\}_{z\in \CC,r>0}$ satisfy the assumptions of Proposition~\ref{lem:G} with $p_1 = 1/2$, %
\end{lemma}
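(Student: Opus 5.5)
The assumptions of Proposition~\ref{lem:G} that need checking are the following. First, $\{H_r(z)\}$ must be translation and scale invariant. Second, it must be measurable with respect to $\sigma(P_-\cup P_+, \cap_{\varepsilon>0}\sigma(\pi(h)\lvert_{B_\varepsilon(P_-\cup P_+)}))$. Both were already verified in the discussion following \eqref{eq:3}. So the only thing left to prove is the probability lower bound $\PP(H_r(z))\geq 1/2$ for a suitable $\Phi$ depending only on $\delta,\alpha$. By translation and scale invariance, $\PP(H_r(z))=\PP(H_1(0))$. The plan is therefore to bound $\PP(H_1(0))$ from below, uniformly over the family of points produced by Proposition~\ref{lem:G}.

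Take $z=0$, $r=1$. The points then satisfy $u_+,u_-\in B_\Phi(-1)$ and $v_+,v_-\in B_\Phi(1)$. Apply Lemma~\ref{lem:4} with the given $\delta,\alpha$ and with $\nu=1/2$. This produces a $\Phi\in(0,1/10)$ depending only on $\delta,\alpha$ such that, with probability at least $1/2$, items \eqref{it:euc_dim_convergence1}--\eqref{it:min_thickness_convergence1} hold simultaneously for all $x,y\in B_\Phi(-1)$, $x',y'\in B_\Phi(1)$ and all geodesics $\Gamma_{x,x'},\Gamma_{y,y'}$. Specialise this to $x=u_+$, $x'=v_+$, $y=u_-$, $y'=v_-$, so that $\Gamma_{x,x'}=P_+$ and $\Gamma_{y,y'}=P_-$; these geodesics are a.s.\ unique since the points are fixed. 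On this event, items \eqref{it:euc_dim_convergence1}--\eqref{it:min_thickness_convergence1} are exactly the defining inequalities of $H_{1,1}(0),\dots,H_{1,4}(0)$ in \eqref{eq:H1}--\eqref{eq:H4}. Hence $\PP(H_1(0))\geq 1/2$.

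One point must be handled carefully: there is a circularity in the choice of $\Phi$. Proposition~\ref{lem:G} produces the points $u_\pm,v_\pm$ only after $\Phi$ is fixed, while the events $H_r(z)$ depend on these points. Lemma~\ref{lem:4} resolves this because its conclusion is uniform over all $x,y,x',y'$ in the $\Phi$-balls. So we fix $\Phi$ first through Lemma~\ref{lem:4}, then take whatever points Proposition~\ref{lem:G} supplies, and the bound $\PP(H_r(z))\geq 1/2=p_1$ holds for any such choice. The resulting $A,\rho$ then depend only on $p_1=1/2$, as the proposition requires.

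Two further details need attention. The quantities $\chi$ and $\chi'$ in $H_{r,3}$ are applied to $P_+\cap P_-$ viewed as a geodesic segment with its unit-speed parametrisation. This intersection is a curve by Proposition~\ref{prop:uniqueness_of_geodesics}, so these quantities are well defined. This matching of definitions is the only step where any care is needed; the rest of the argument is direct bookkeeping.
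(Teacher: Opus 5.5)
Your proposal is correct and takes essentially the same route as the paper. You fix $\Phi$ via Lemma~\ref{lem:4} with $\nu=1/2$, take the points from Proposition~\ref{lem:G} for that $\Phi$, reduce to $(z,r)=(0,1)$ by translation and scale invariance, and read off \eqref{eq:H1}--\eqref{eq:H4} from the uniform conclusion of Lemma~\ref{lem:4} with $(x,y,x',y')=(u_+,u_-,v_+,v_-)$, deferring measurability to the discussion after \eqref{eq:3}; your remark that the uniformity of Lemma~\ref{lem:4} over the $\Phi$-balls removes the apparent circularity between $\Phi$ and the points is exactly what makes the paper's order of choices legitimate.
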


\begin{proof}

  We apply Lemma \ref{lem:4} with $\nu=1/2$ and $\delta$ and obtain a resulting $\Phi\in (0,1/10)$. We now use this $\Phi$ with Proposition \ref{lem:G} to obtain the points $u_+,u_-,v_+,v_-$, one set for each $z\in \CC, r>0$, with the set being translation and scale invariant in $z,r$. We recall that these points $u_+,u_-,v_+,v_-$ are in turn used to define the translation and scale invariant family of events $\{H_r(z)\}_{z\in \CC,r>0}$, and our goal now is to verify that this family satisfies $\PP(H_r(z))\geq p_1=1/2$ as is required to apply Proposition \ref{lem:G}. That $H_r(z)\in \sigma(P_-\cup P_+, \cap_{\varepsilon>0}\sigma(\pi(h)\lvert_{B_{\varepsilon}(P_-\cup P_+)}))$ follows by the discussion just after \eqref{eq:3}, so it only remains to obtain the above probability estimate.

 Now, due to the translation and scale invariance of family of events $\{H_r(z)\}_{z\in \CC,r>0}$, it suffices to obtain the probability bound $\PP(H_r(z))\geq p_1=1/2$ for $z=0,r=1$. For this, by our choice, the points $u_-,u_+,v_-,v_+$ satisfy $u_+,u_-\in B_{\phi}(-1)$ and $v_+,v_-\in B_{\phi}(1)$ and thus we can legitimately apply Lemma \ref{lem:4} with $(x,y,x',y')=(u_+,u_-,v_+,v_-)$, and this immediately yields a $\Phi\in (0,1/10)$ for which we have $\PP(H_{1}(0))\geq 1/2$, thereby completing the proof.%

\end{proof}

We are now ready to complete the proof of Theorem~\ref{thm:1}. First, we split up Theorem \ref{thm:1} into proving the following two propositions.

\begin{proposition}\label{prop:lower_bound}
  Fix $\gamma \in (0,2)$ and $\alpha \in [-2,2]$ and let $h$ be a whole-plane GFF. %
Then, with $\Delta^{\mathrm{Euc}}, \Delta_{\alpha}^{\mathrm{Euc}}, \Delta_{\alpha}^{\mathrm{LQG}}, \chi, \chi', \alpha_{\mathrm{min}}$, and $\alpha_{\mathrm{max}}$ being as in Proposition \ref{prop:zero_one_law}, the following holds almost surely. For any $D_h$-geodesic $P\colon [0,T]\rightarrow \CC$ parametrised by $D_h$-length and any $[s,t]\subseteq (0,T)$, we have
\begin{enumerate}
  \item \label{it:euclidean_lower}
  $\dim_{\mathrm{Euc}}(P\lvert_{[s,t]})\geq\Delta^{\mathrm{Euc}}$,
  \item \label{it:thick_points_lower}
  $\dim_{\mathrm{Euc}}(P\lvert_{[s,t]}\cap \cT_h^{\alpha})\geq \Delta^{\mathrm{Euc}}_{\alpha}$ and $\dim_{\mathrm{LQG}}(P\lvert_{[s,t]}\cap \cT^\alpha_h)\geq\Delta^{\mathrm{LQG}}_{\alpha}$,
  \item \label{it:optimal_holder_lower}
  $\chi(P\lvert_{[s,t]})\leq\chi$ and $\chi'(P\lvert_{[s,t]})\geq\chi'$,
  \item \label{it:max_min_thickness_lower}
  $\alpha_{\mathrm{max}}(P|_{[s,t]})\geq\alpha_{\mathrm{max}}$ and $\alpha_{\mathrm{min}}(P|_{[s,t]}) \leq \alpha_{\mathrm{min}}$.
 \end{enumerate}  
\end{proposition}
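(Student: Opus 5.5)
The plan is to apply Proposition~\ref{lem:G}, with the events $H_r(z)$ from \eqref{eq:3}, to a countable family of parameters $\delta\in\{1/k\}_{k\in\NN}$ and bounded open sets $U\in\{B_m(0)\}_{m\in\NN}$. For each $k$, Lemma~\ref{lem:1} (with $\delta=1/k$) supplies a $\Phi$ and a translation and scale invariant family $H_r(z)$ with $\PP(H_r(z))\geq 1/2$. Proposition~\ref{lem:G} then applies, and we get an almost sure event $\Omega_{k,m,\beta}$ for each rational $\beta>0$. Intersecting over the countably many $(k,m,\beta)$ yields one almost sure event, and on it we argue deterministically for every geodesic $P$ and every $[s,t]\subseteq(0,T)$.

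The first step is to reduce to geodesics started from grid points. Fix $P\colon[0,T]\to\CC$ and $0<s<t<T$. By Proposition~\ref{prop:uniqueness_of_geodesics}, $P\lvert_{[s',t']}$ is the unique geodesic between its endpoints for any $0<s'<t'<T$. Pick $s'\in(0,s)$. We want grid points $\bz\in\mathcal H_n$ converging to $P(s')$ and endpoints $w$ (say $w=P(t)$, or a point slightly beyond) such that some geodesic $\Gamma_{\bz,w}$ contains the segment $P\lvert_{[s'',t]}$ for some $s''\in(s',s)$. Strong confluence (Proposition~\ref{prop:strong_confluence}) gives this. Take $\bz_n\to P(s')$ with $\bz_n\in\mathcal H_n$, and geodesics $\Gamma_{\bz_n,P(t)}$. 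By Arzel\`a--Ascoli these subsequentially converge to a geodesic from $P(s')$ to $P(t)$, which must be $P\lvert_{[s',t]}$ by uniqueness. Hence for large $n$ the two paths agree outside small balls around $P(s')$ and $P(t)$.

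The case $w=P(t)$ loses a neighbourhood of the endpoint. To handle this, I instead take $w=P(t')$ with $t'\in(t,T)$ and apply the same argument to $P\lvert_{[s',t']}$. Then, for $n$ large, $\Gamma_{\bz_n,w}$ and $P$ coincide on a segment containing $P\lvert_{[s-\eta,t+\eta]}$ for small $\eta$, up to a time shift $\sigma_n\to 0$.

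Now apply Proposition~\ref{lem:G} with $\beta$ rational and smaller than $D_h(P(s'),P(t'))/2$, and with $U$ large enough to contain $P$. This produces $0<\varphi<\psi<\beta/20$, depending only on $U,\beta$, and an $\scX$ with $P_-\cap P_+=\Gamma_{\bz_n,w}\lvert_{[\tau^-,\tau^+]}$ and $\varphi<\tau^-<\tau^+<\psi$. This is the main obstacle: the $\scX$ lies within $D_h$-distance $\psi$ of $\bz_n$, not inside $[s,t]$. To fix it, choose $s'$ so that $s-s'>\beta$ and also $\beta<t-s$. More concretely, since $\varphi,\psi$ are fixed before $n$, take $s'$ with $s-s'$ small, but first pick $\beta<(t-s)$ and then set $s'=s-\varphi/2$. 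This is not circular, because $\varphi$ depends only on $(U,\beta)$. In the $\Gamma_{\bz_n,w}$ time parametrisation, the $\scX$ segment then sits in $[\varphi,\psi]$, which corresponds to $P$-times in $[s'+\varphi-o(1),s'+\psi+o(1)]\subseteq(s,t)$, since $\psi<\beta/20<t-s$. Consequently $P_-\cap P_+\subseteq P\lvert_{[s,t]}$.

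Finally, on $H_{\rho r/A^2}(z')$ the items \eqref{eq:H1}--\eqref{eq:H4} hold for $P_+\cap P_-$ with $\delta=1/k$. Each statistic is monotone under enlarging the segment: dimensions and $\alpha_{\max}$ increase, $\alpha_{\min}$ decreases, $\chi$ can only decrease and $\chi'$ only increase. Therefore $P\lvert_{[s,t]}$ inherits the bounds up to $1/k$, and letting $k\to\infty$ gives all four items.

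For the H\"older exponents, one extra check is needed. $P_+\cap P_-$ is a subsegment of the bulk of $P\lvert_{[s,t]}$, and its unit-speed parametrisation agrees with that of $P$ up to translation. So the optimal exponents of $P\lvert_{[s,t]}$ restricted to any $[s+\epsilon,t-\epsilon]$ containing the segment are bounded by the segment's exponents. I need to verify that the bulk-limit definition \eqref{eq:14} is compatible with this. It should be, since the $\scX$ segment lies strictly inside $(s,t)$ and the statement only involves bulk exponents of that segment.
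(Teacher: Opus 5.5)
Your proposal is correct and is essentially the paper's proof. You approximate $P\lvert_{[s,t]}$ by geodesics from grid points using Proposition~\ref{prop:uniqueness_of_geodesics} and Proposition~\ref{prop:strong_confluence}, invoke Lemma~\ref{lem:1} and Proposition~\ref{lem:G} to get an $\scX$ satisfying $H$ at $D_h$-distance in $[\varphi,\psi]$ from the starting point (hence inside $P\lvert_{(s,t)}$), and conclude by monotonicity of the statistics and $\delta\to 0$; your explicit countable intersection over $(\delta,U,\beta)$ makes precise a point the paper leaves implicit.

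A few small remarks. Your first suggestion $s-s'>\beta$ would place the $\scX$ before $P(s)$ and must be discarded. The shift $s'=s-\varphi/2$ (and $t'>t$) is not even needed: starting at $P(s)$, as the paper does, already puts the segment at $P$-times in $[s+\varphi-o(1),s+\psi+o(1)]\subseteq(s,t)$. Finally, the H\"older check you flag holds because the limits in \eqref{eq:14} are monotone, so $\widetilde{\chi}(Q)\le\chi(Q)$ and $\widetilde{\chi}'(Q)\ge\chi'(Q)$ for the segment $Q=P_-\cap P_+$.
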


\begin{proposition}\label{prop:upper_bound}
  Fix $\gamma \in (0,2)$ and $\alpha \in [-2,2]$ and let $h$ be a whole-plane GFF. %
Then, with $\Delta^{\mathrm{Euc}}, \Delta_{\alpha}^{\mathrm{Euc}}, \Delta_{\alpha}^{\mathrm{LQG}}, \chi, \chi', \alpha_{\mathrm{min}}$, and $\alpha_{\mathrm{max}}$ being as in Proposition \ref{prop:zero_one_law}, the following holds almost surely. For any $D_h$-geodesic $P\colon [0,T]\rightarrow \CC$ parametrised by $D_h$-length and any $[s,t]\subseteq (0,T)$, we have
\begin{enumerate}
  \item \label{it:euclidean_upper}
  $\dim_{\mathrm{Euc}}(P\lvert_{[s,t]})\leq\Delta^{\mathrm{Euc}}$,
  \item \label{it:thick_points_lower}
  $\dim_{\mathrm{Euc}}(P\lvert_{[s,t)}\cap \cT_h^{\alpha})\leq \Delta^{\mathrm{Euc}}_{\alpha}$ and $\dim_{\mathrm{LQG}}(P\lvert_{[s,t]}\cap \cT^\alpha_h)\leq\Delta^{\mathrm{LQG}}_{\alpha}$,
  \item \label{it:optimal_holder_lower}
  $\chi(P\lvert_{[s,t]})\geq\chi$ and $\chi'(P\lvert_{[s,t]})\leq\chi'$,
  \item \label{it:max_min_thickness_upper}
  $\alpha_{\mathrm{max}}(P|_{[s,t]}) \leq \alpha_{\mathrm{max}}$ and $\alpha_{\mathrm{min}}(P|_{[s,t]}) \geq \alpha_{\mathrm{min}}$.
 \end{enumerate}  
\end{proposition}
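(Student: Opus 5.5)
The statement to prove is Proposition~\ref{prop:upper_bound}. The plan is to cover the interior of an arbitrary geodesic segment by countably many geodesics whose endpoints are typical points. Proposition~\ref{prop:zero_one_law} then gives the deterministic values along those typical geodesics, and monotonicity transfers them to $P\lvert_{[s,t]}$.

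\textbf{Step 1: a countable family of typical geodesics.} Let $\mathcal{Q}=\QQ^2$. For each pair $q\neq q'\in\mathcal{Q}$, the geodesic $\Gamma_{q,q'}$ is a.s.\ unique, since the points are fixed. Proposition~\ref{prop:zero_one_law} is stated for geodesics from a fixed starting point, so by countability the following hold a.s.\ simultaneously for all pairs $q,q'$:
\begin{itemize}
\item $\dim_{\mathrm{Euc}}(\Gamma_{q,q'})=\Delta^{\mathrm{Euc}}$;
\item the thick-point dimensions of $\Gamma_{q,q'}$ equal $\Delta_\alpha^{\mathrm{Euc}}$ and $\Delta_\alpha^{\mathrm{LQG}}$;
\item $\chi(\Gamma_{q,q'})=\chi$ and $\chi'(\Gamma_{q,q'})=\chi'$;
\item $\alpha_{\max}(\Gamma_{q,q'})=\alpha_{\max}$ and $\alpha_{\min}(\Gamma_{q,q'})=\alpha_{\min}$.
\end{itemize}
Every statistic in the proposition is monotone under inclusion: dimensions and $\alpha_{\max}$ increase with the set, and $\alpha_{\min}$ decreases. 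It therefore suffices to show that $P\lvert_{[s,t]}$ is contained in $\Gamma_{q,q'}$ for some rational $q,q'$, or else in a finite union of such geodesics. For the dimensions, countable stability of Hausdorff dimension then gives the upper bounds.

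\textbf{Step 2: the covering.} Fix $s'<s$ and $t'>t$ in $(0,T)$. By Proposition~\ref{prop:uniqueness_of_geodesics}, $P\lvert_{[s',t']}$ is the unique geodesic from $P(s')$ to $P(t')$. Choose rationals $q_n\to P(s')$ and $q_n'\to P(t')$. Any subsequential Hausdorff limit of $\Gamma_{q_n,q_n'}$ is a geodesic from $P(s')$ to $P(t')$, so by uniqueness $\Gamma_{q_n,q_n'}\to P\lvert_{[s',t']}$. Proposition~\ref{prop:strong_confluence} then says that, for $n$ large, $\Gamma_{q_n,q_n'}$ and $P\lvert_{[s',t']}$ differ only inside small balls around $P(s')$ and $P(t')$. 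Taking the ball radius smaller than the distance from $P([s,t])$ to $\{P(s'),P(t')\}$ gives $P\lvert_{[s,t]}\subseteq\Gamma_{q_n,q_n'}$. With this, items (1), (2) and (4) follow at once.

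\textbf{Step 3: the H\"older exponents.} Item (3) needs more care. Here the segment $P\lvert_{[s,t]}$ is an actual subpath $\Gamma_{q_n,q_n'}\lvert_{[a,a+(t-s)]}$ with the same unit-speed parametrisation, and it lies in the interior of $\Gamma_{q_n,q_n'}$. Since $\widetilde{\chi}$ can only increase and $\widetilde{\chi}'$ can only decrease when a path is restricted to a sub-interval, a sub-interval of $[\varepsilon,T_n-\varepsilon]$ gives $\chi(P\lvert_{[s,t]})\geq\chi(\Gamma_{q_n,q_n'})=\chi$, and similarly for $\chi'$. The same argument must work for all $[s,t]$ simultaneously, which is why Step 2 has to hold on a single almost sure event.

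\textbf{Main obstacle.} The main difficulty is making Step 2 hold simultaneously for every geodesic $P$ and every interval $[s,t]$. This requires the a.s.\ events of Propositions~\ref{prop:strong_confluence} and~\ref{prop:uniqueness_of_geodesics}, which are already uniform over all geodesics, together with the countable family from Step 1. The compactness argument identifying the Hausdorff limit is routine, using geodesic-ness of $D_h$ and continuity of $D_h$ with respect to the Euclidean metric.
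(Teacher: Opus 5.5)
Your proposal is correct and follows the paper's proof essentially step for step: you apply Proposition~\ref{prop:zero_one_law} simultaneously to all geodesics with endpoints in $\QQ^2$, approximate $P(s')$ and $P(t')$ by rational points, use Proposition~\ref{prop:uniqueness_of_geodesics} to identify the Hausdorff limit as $P\lvert_{[s',t']}$, and invoke Proposition~\ref{prop:strong_confluence} to get $P\lvert_{[s,t]}\subseteq\Gamma_{q_n,q_n'}$, after which monotonicity gives every item. Your Step 3 spells out a detail the paper leaves implicit: the H\"older exponents transfer because $P\lvert_{[s,t]}$ is a unit-speed subpath lying strictly inside $[\varepsilon,T_n-\varepsilon]$ for small $\varepsilon$.
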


\begin{proof}[Proof of Theorem~\ref{thm:1} assuming Propositions~\ref{prop:lower_bound} and ~\ref{prop:upper_bound}.]
The result follows immediately by combining Propositions~\ref{prop:lower_bound} and ~\ref{prop:upper_bound}.   
\end{proof}

We now start by proving Proposition~\ref{prop:lower_bound}.

\begin{proof}[Proof of Proposition~\ref{prop:lower_bound}]
  Fix $\delta>0,\alpha \in [-2,2]$ and let $\Phi \in (0,1/10)$ and the events $\{H_{r}(z)\}_{z\in \CC, r>0}$ be as obtained by invoking Lemma \ref{lem:1}. Now, by invoking Proposition \ref{lem:G} with the above value of $\Phi$ and $p_1=1/2$, we obtain resulting constants $A>1$ and $\rho\in (0,1)$. For the rest of the proof, we shall work with these constants $\Phi, A,\rho$ and the corresponding events $\{H_r(z)\}_{z\in \CC,r>0}$.

With $[s,t]$ being as in the statement of Proposition \ref{prop:lower_bound}, let $P: [0,T] \to \CC$ be \emph{any} $D_h$-geodesic parametrised according to $D_h$-length and fix $0<s<s'<t'<t<T$. We now let $U$ be an Euclidean ball centered around $0$ with integer radius chosen large enough so as to contain the entire geodesic $P$.  Following the notation in the statement of Proposition~\ref{lem:G}, we define the set $\mathcal{H}_{n}$ using the above $U$ and also define $\varepsilon_n=2^{-n}$ for all $n\in \NN$. Now, let $(\varepsilon_{k_n})$ be a subsequence of $(\varepsilon_n)$ and points $z_n,w_n \in \mathcal{H}_{k_n}$ such that $z_n \to P(s)$ and $w_n \to P(t)$ as $n \to \infty$. Then, by possibly passing into a subsequence, we can assume that there exists a $D_h$-geodesic $\widetilde{P}$ from $P(s)$ to $P(t)$ such that 
\begin{equation*}
    \Gamma_{z_n,w_n} \to \widetilde{P} \,\, \text{as} \,\, n \to \infty
\end{equation*}
in the Hausdorff sense with respect to the Euclidean metric. Note that Proposition~\ref{prop:uniqueness_of_geodesics} implies that it is almost surely the case that $P|_{[s,t]}$ is the unique $D_h$-geodesic between $P(s)$ and $P(t)$, and hence $\widetilde{P} = P|_{[s,t]}$. It follows that
\begin{equation}
  \label{eq:hausdconv}
    \Gamma_{z_n,w_n} \to P|_{[s,t]} \,\, \text{as} \,\, n \to \infty
\end{equation}
in the Hausdorff sense with respect to the Euclidean metric. 

Set $\beta = |t'-s'| > 0$ and let $0<\varphi < \psi < \beta / 20$ be the constants in the statement of Proposition~\ref{lem:G} which correspond to that choice of $\beta$. %
Then, since $z_n,w_n$ converge to $P(s),P(t)$ respectively as $n\rightarrow \infty$, there must exist an $n_0\in \NN$ such that $D_h(z_n,w_n) \geq \beta$ for all $n \geq n_0$. Now, in view of \eqref{eq:hausdconv} and the strong confluence result (Proposition~\ref{prop:strong_confluence}), by possibly taking $n_0$ to be larger, we can assume that
\begin{equation*}
    \Gamma_{z_n,w_n}\lvert_{[\varphi,\psi]} \subseteq P\lvert_{(s,t)}\,\,\text{for all} \,\, n \geq n_0.
\end{equation*}

Further, by possibly taking $n_0$ to be larger and applying Proposition~\ref{lem:G}, we can assume that the following holds for all $n \geq n_0$. There exist $z \in \CC, r>0$, and $\varphi < \tau < \tau' < \psi$, such that if $u_+,u_-,v_+,v_-$ are the points corresponding to the point $z$ at Euclidean scale $\rho r / A^2$ in the statement of Proposition~\ref{lem:G} and $P_+$ (resp.\ $P_-$) is the $D_h$-geodesic from $u_+$ to $v_+$ (resp.\ $u_-$ to $v_-$), then we have $P_- \cap P_+ = \Gamma_{z_n,w_n}\lvert_{[\tau,\tau']}$ and the $D_h$-geodesic $P_- \cap P_+$ satisfies conditions~\eqref{eq:H1}-\eqref{eq:H4} in the definition of the event $H_r'(z)$. In particular, we have that $P_- \cap P_+ \subseteq P\lvert_{(s,t)}$, and thus the following hold.
\begin{enumerate}
    \item $\dim_{\mathrm{Euc}}(P\lvert_{[s,t]}) \geq \dim_{\mathrm{Euc}}(P_- \cap P_+) \geq \Delta^{\mathrm{Euc}} - \delta$,
    \item $\dim_{\mathrm{Euc}}(P\lvert_{[s,t]} \cap \mathcal{T}_h^{\alpha}) \geq \dim_{\mathrm{Euc}}((P_- \cap P_+) \cap \mathcal{T}_h^{\alpha}) \geq \Delta_{\alpha}^{\mathrm{Euc}} - \delta$,
    \item $\dim_{\mathrm{LQG}}(P\lvert_{[s,t]} \cap \mathcal{T}_h^{\alpha}) \geq \dim_{\mathrm{LQG}}((P_- \cap P_+) \cap \mathcal{T}_h^{\alpha}) \geq \Delta_{\alpha}^{\mathrm{LQG}} - \delta$,
    \item $\chi(P|_{[s,t]}) \leq \chi(P_- \cap P_+) \leq \chi + \delta,\,\,\chi'(P|_{[s,t]}) \geq \chi'(P_- \cap P_+) \geq \chi' - \delta$,
    \item 
        $\alpha_{\mathrm{max}}(P|_{[s,t]}) \geq \alpha_{\mathrm{max}}(P_- \cap P_+) \geq \alpha_{\mathrm{max}} - \delta,\,\, \alpha_{\mathrm{min}}(P|_{[s,t]}) \leq \alpha_{\mathrm{min}}(P_- \cap P_+) \leq \alpha_{\mathrm{min}} + \delta$.
\end{enumerate}
Since $\delta>0$ was arbitrary, this completes the proof of the proposition.
\end{proof}

Finally, we now provide the proof of Proposition~\ref{prop:upper_bound}.

\begin{proof}[Proof of Proposition~\ref{prop:upper_bound}]
Fix $\alpha \in [-2,2]$. Then, Proposition~\ref{prop:zero_one_law} implies that it is almost surely the case that for all $z,w \in \QQ^2$ distinct points, we have that the $D_h$-geodesic $\Gamma_{z,w}$ satisfies items~\eqref{it:euclidean_deterministic}-\eqref{it:max_min_thickness_deterministic} in the statement of Proposition~\ref{prop:zero_one_law}. For the rest of the proof, we will assume that we are working on the event that the above holds.

Let $P: [0,T] \to \CC$ be a $D_h$-geodesic and fix $0<s'<s<t<t'<T$. Let also $(z_n), (w_n)$ be sequences of points in $\QQ^2$ such that $z_n \to P(s')$ and $w_n \to P(t')$ as $n \to \infty$. Then, by arguing as in the proof of Proposition~\ref{prop:lower_bound} and using the fact that it is almost surely the case that $P|_{[s',t']}$ is the unique $D_h$-geodesic from $P(s')$ to $P(t')$ by Proposition~\ref{prop:uniqueness_of_geodesics}, we obtain that by possibly passing into a subsequence, we can assume that $\Gamma_{z_n,w_n} \to P|_{[s',t']}$ as $n \to \infty$ in the Hausdorff sense with respect to the Euclidean metric.
Thus, by arguing as in the proof of Proposition~\ref{prop:lower_bound} and using Proposition~\ref{prop:strong_confluence}, we obtain that it is almost surely the case that there exists $n_0 \in \NN$ such that
\begin{equation*}
    P\lvert_{[s,t]} \subseteq \Gamma_{z_n,w_n} \,\, \text{for all} \,\, n \geq n_0.
\end{equation*}
Since $\Gamma_{z_n,w_n}$ satisfies items~\eqref{it:euclidean_deterministic}-\eqref{it:max_min_thickness_deterministic} in the statement of Proposition~\ref{prop:zero_one_law}, it follows that for all $n \geq n_0$, we have the following.
\begin{enumerate}
    \item $\dim_{\mathrm{Euc}}(P\lvert_{[s,t]}) \leq \dim_{\mathrm{Euc}}(\Gamma_{z_n,w_n}) = \Delta^{\mathrm{Euc}}$,
    \item $\dim_{\mathrm{Euc}}(P\lvert_{[s,t]}) \cap \mathcal{T}_h^{\alpha}) \leq \dim_{\mathrm{Euc}}(\Gamma_{z_n,w_n} \cap \mathcal{T}_h^{\alpha}) = \Delta_{\alpha}^{\mathrm{Euc}}$,
     \item $\dim_{\mathrm{LQG}}(P\lvert_{[s,t]} \cap \mathcal{T}_h^{\alpha}) \leq \dim_{\mathrm{LQG}}(\Gamma_{z_n,w_n} \cap \mathcal{T}_h^{\alpha}) = \Delta_{\alpha}^{\mathrm{LQG}}$,
    \item $\chi(P|_{[s,t]}) \geq \chi(\Gamma_{z_n,w_n}) =\chi,\,\,\chi'(P|_{[s,t]}) \leq \chi'(\Gamma_{z_n,w_n}) = \chi'$,
    \item $\alpha_{\mathrm{max}}(P|_{[s,t]}) \leq \alpha_{\mathrm{max}}(\Gamma_{z_n,w_n}) = \alpha_{\mathrm{max}},\,\, \alpha_{\mathrm{min}}(P|_{[s,t]}) \geq \alpha_{\mathrm{min}}(\Gamma_{z_n,w_n}) = \alpha_{\mathrm{min}}$.
\end{enumerate}
This completes the proof of the proposition.
\end{proof}

\printbibliography
 \end{document}